\newtheorem{remark}{Remark}
\newtheorem{theorem}{Theorem}
\newtheorem{lemma}{Lemma}
\newtheorem{problem}{Problem}
\newtheorem{assumption}{Assumption}
\newtheorem{corollary}{Corollary}
\title{\LARGE \bf
Optimal Local and Remote Controls of Multiple Systems with Multiplicative Noises and Unreliable Uplink Channels
}
\author{ Qingyuan Qi, Lihua Xie*, and Huanshui Zhang
\thanks{This work was supported by Agency for Science, Technology and Research of Singapore under grant A1788a0023, National Natural Science Foundation of China under grants 61903210, 61633014, 61873179, Natural Science Foundation of Shandong Province under grant ZR2019BF002, China Postdoctoral Science Foundation under grant 2019M652324, and Qingdao Postdoctoral Application Research Project. (Corresponding author: Lihua Xie.)

Q. Qi (qiqy123@163.com) is with Institute of Complexity Science, College of Automation, Qingdao University, Qingdao, China 266071, and also with  School of Electrical and Electronic Engineering, Nanyang Technological University, Singapore 639798. L. Xie (ELHXIE@ntu.edu.sg) is with School of Electrical and Electronic Engineering, Nanyang Technological University, Singapore 639798. H. Zhang (hszhang@sdu.edu.cn) is with School of Control Science and Engineering, Shandong University, Jinan, China, 250061.
}
}
\begin{document}

\maketitle
\thispagestyle{empty}
\pagestyle{empty}

\begin{abstract}
In this paper, the optimal local and remote linear quadratic (LQ) control problem is studied for {a networked control system (NCS) which consists of multiple subsystems and each of which is described by a general multiplicative noise stochastic system with one local controller and one remote controller}. Due to the unreliable uplink channels, the remote controller {can only access} unreliable state information of all subsystems, while the downlink channels from {the remote controller to the local controllers} are perfect. The difficulties of the LQ control problem {for such a system} arise from the different information structures of {the local controllers and the remote controller}. By developing the Pontyagin maximum principle, the necessary and sufficient solvability conditions are derived, which are based on the solution to a group of forward and backward difference equations (G-FBSDEs). Furthermore, by proposing a new method to decouple the G-FBSDEs and introducing new coupled Riccati equations (CREs), the optimal control strategies are derived {where we verify} that the separation principle holds for {the} multiplicative noise {NCSs with packet dropouts}. This paper can be seen as an important {contribution to the} optimal control problem with asymmetric information structures.
\end{abstract}
\begin{IEEEkeywords}
Multiplicative noise system, multiple subsystems, optimal local and remote controls, Pontryagin maximum principle.
\end{IEEEkeywords}


\section{Introduction}
As is well known, NCSs are systems in which actuators, sensors, and controllers exchange information through {a shared bandwidth limited} digital communication network. The research on NCSs has attracted {significant interest} in recent years, due to the {advantages of NCSs such as low cost and simple installation}, see \cite{hnx2007,iyb2006,wyb2002,ssfps2007,zbp2001,qz2017,qz2018,xxq2012} and the cited references therein. {On the other hand, unreliable wireless communication channels and limited bandwidth make NCSs less reliable \cite{wyb2002,ssfps2007,zbp2001}}, which may cause performance loss and destabilize the NCSs. Thus, it is necessary to investigate control problems for NCSs with unreliable communication channels.

In this paper, {we investigate multiplicative noise NCSs (MN-NCSs) with local and remote controls and unreliable uplink channels.} As shown in Figure 1, the NCS is composed of $L$ subsystems, and each subsystem is regulated by one local controller and one remote controller. {Due to the limited communication capability of each subsystem, the uplink channel from a local controller to the remote controller is affected by packet dropouts, while the downlink channel from the remote controller to each local controller is perfect. The LQ optimal control problem is considered in this paper. Our aim is to design $L$ local controllers and the remote controller such that a given cost function is minimized. Due to the uncertainty of the uplink channels, the information sets available to the local controllers and the remote controller are different. Hence, we are concerned with the optimal control problem with an asymmetric information structure which pose major challenges.}
 \begin{figure}[htbp]\label{fig1:1}
  \centering
  \includegraphics[width=0.45\textwidth]{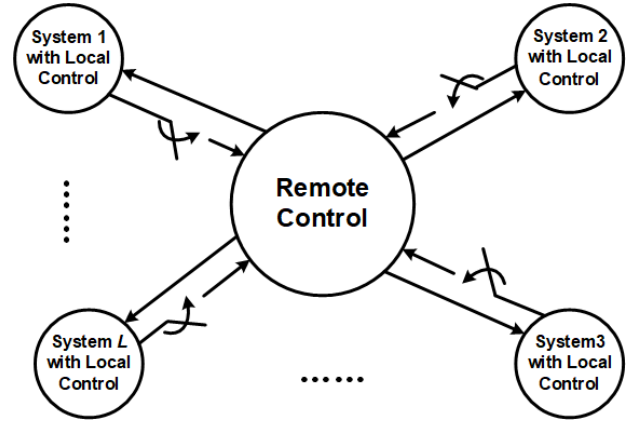}
  \caption{System model: Multiple subsystem NCSs with unreliable uplink channels from the local controllers to the remote controller, and perfect downlink channels from the remote controller to the local controllers.}
\end{figure}

The pioneering study of asymmetric information control can be traced back to the 1960s, and it is well-known that finding optimal controls for an asymmetric-information control problem is difficult, see \cite{w1968b,b2008,rrj2016,lm2011}. For example, \cite{w1968b} gives the celebrated Witsenhausen's Counterexample {for which} the associated explicit optimal control with asymmetric information structures still remains to be solved.

The optimal local and remote control problems were first studied in \cite{nmt2013, aon2018}. By using {the common information approach}, the optimal local and remote controls are derived. {Following \cite{aon2018}, an elementary proof of the common information approach} was given in \cite{am2019}. Furthermore, \cite{lx2018} studies the optimal local and remote control problem with only one subsystem by using maximum principle approach. It should be pointed out that previous works \cite{aon2018,oan2016,lx2018, oan2018b, am2019, nmt2013} {focused on NCSs with additive noise}. Different from {the} previous works, this paper investigates the local control and remote control problem for networked systems with multiplicative noise and multiple subsystems. The motivations of this paper are: On one hand, multiplicative noise systems exist in many applications. The existence of multiplicative noise results in the non-Gaussian property of the NCSs, see \cite{j1976,b1976, zlxf2015,rz2000,w1968, qzw2019}. On the other hand, the unreliable uplink {channels result in that the state estimation errors are} involved with the control inputs, which may {lead to the failure of the separation principle}. In other words, it remains difficult to design the optimal output feedback control for MN-NCSs with unreliable uplink channels, see \cite{j1976,aon2018,lx2018, oan2018b}. Furthermore, the optimal local and remote control problem for multiple subsystems can be regarded as a special case of {optimal control for multi-agent systems}, {for which the} optimal decentralized/distributed control design remains challenging, see \cite{yx2010,ml2014}.

Based on the above discussions, we can conclude {that the study of the} local and remote control problem for MN-NCSs with multiple subsystems {has} the following difficulties and challenges: 1) Due to the possible failure of {the} separation principle for MN-NCSs with unreliable uplink channels, the {derivation of the} optimal ``output feedback" controllers remains challenging. 2) {When the Pontryagin maximum principle is adopted}, how to decouple a group of forward and backward difference equations (G-FBSDEs) is difficult and unsolved.

In this paper, the optimal local and remote control problem for MN-NCSs with multiple subsystems is solved. Firstly, by developing the Pontryagin maximum principle, we show that the optimal control problem under consideration is uniquely solved if and only if a group of FBSDEs (G-FBSDEs) is uniquely solved; Consequently, a method is proposed to decouple the G-FBSDEs, it is shown that the solution to the original G-FBSDEs can be given by decoupling new G-FBSDEs {and} introducing new information filtrations. Furthermore, the optimal local and remote controllers are derived based on the solution to coupled CREs, which are asymmetric. {As special cases, the additive noise NCSs case, the single subsystem case, and the indefinite weighting matrices case are also investigated}.

As far as we know, the obtained results are new and innovative in the following aspects: 1) Compared with the common information approach adopted in previous works, the structure of the optimal local controllers and remote controller is not assumed in advance, see Lemma 3 in \cite{am2019}; 2) In this paper, the multiple subsystems case is solved by using {the} Pontragin Maximum principle, while previous works focused on {the} single subsystem case \cite{lx2018}; 3) The existence of multiplicative noise results in that the state estimate error and the error covariance are involved with the controls, {which may cause the separation principle to fail}. To overcome this, new {asymmetric} CREs are introduced; It is verified that {the} separation principle holds for the considered optimization problem, i.e., the optimal local controllers and optimal remote controller can be designed, and the control gain matrices and the estimation gain matrix can be calculated separately;  4) It is noted that the obtained results {include} the results in \cite{lx2018,am2019,aon2018} as special cases.

The rest of the paper is organized as {follows}. Section II investigates the existence of the optimal control strategies. The main results are presented in Section III, {where} the optimal local controllers and optimal remote controller are derived by decoupling the G-FBSDEs. Consequently, some discussions are given in Section IV. {In Section V, numerical examples are presented to illustrate the main results. We conclude this paper in Section VI. Finally, the proof of the main results is given in the Appendix.}

The {following} notations will be used throughout this paper: $\mathbb{R}^n$ denotes the $n$-dimensional Euclidean space, and $A^T$ means the transpose of matrix $A$; {The subscript of $A^i, i=1,\cdots, L$ means the $i$-th subsystem, and subscript of $A^{\textbf{k}}$ denotes $A$ to the power of $k$.} Symmetric matrix $M>0$ ($\geq 0$) {means that $M$ is positive definite (positive semi-definite)}; $\mathbb{R}^{n}$ {is the} $n$-dimensional Euclidean space; $I_n$ denotes the identity matrix {of dimension $n$}; $E[\cdot]$ means the mathematical expectation and $E[\cdot|Y]$ signifies the conditional expectation with respect to $Y$. $\mathcal{N}(\mu, \Sigma)$ {is} the normal distribution with mean $\mu$ and covariance $\Sigma$, and $Pr(A)$ denotes the probability {of the occurrence of event $A$}. $\mathcal{F}(X)$ means the filtration generated by random variable/vector $X$, $Tr(\cdot)$ means the trace of a matrix, $vec(x^1, x^2,x^3,\cdots)$ means the vector $[(x^1)^T, (x^2)^T, (x^3)^T, \cdots]^T$, and $\sigma(X)$ denotes the $\sigma$-algebra generated by random vector $X$.

\section{Existence of Optimal Control Strategy}

\subsection{Problem Formulation}
The system dynamics of the $i$-th subsystem ($i=1,\cdots,L$) is given by
\begin{align}\label{sm-1}
  x_{k+1}^i=&
  [A^i+w^i_k\bar{A}^i]x^i_k+[B^{i}+w^i_k\bar{B}^{i}]u^i_k\notag\\
  &+[B^{i0}+w^i_k\bar{B}^{i0}]u^0_k+v^i_k,
\end{align}
where $x^i_k\in \mathbb{R}^{n_i}$ is the system state of the $i$-th subsystem at time $k$, $A^i, \bar{A}^i\in\mathbb{R}^{n_i\times n_i}, B^{i},\bar{B}^i\in \mathbb{R}^{n_i\times m_i}, B^{i0},\bar{B}^{i0}\in \mathbb{R}^{n_i\times m_{0}}$ are the given system matrices, both $w^i_k\in \mathbb{R}$ and $v^i_k\in \mathbb{R}^{n}$ are Gaussian {white noises satisfying $w_k^i\sim\mathcal{N}(0,\Sigma_{w^i}), v^i_k\sim\mathcal{N}(0,\Sigma_{v^i})$}. The initial state $x^i_0\sim\mathcal{N}(\mu^i,\Sigma_{x_0^i})$. $u^i_k\in\mathbb{R}^{m_i}$ is {the} local controller of {the} $i$-th subsystem, and $u^0_k\in\mathbb{R}^{m_{0}}$ is the remote controller. {Since the uplink channels are unreliable, let binary random variables $\gamma_k^i, i=1, 2, ...$ with probability distribution $Pr(\gamma_k^i=1)=p^i$ denote if a packet is successfully transmitted, i.e., $\gamma_k^i=1$ means {that} the packet is successfully transmitted from the $i$-th local controller to the remote controller, and {fails} otherwise. $p^i, i=1,\cdots,L$ is called the packet dropout rate. Moreover, we assume $x_0, \{\gamma_k^i\}_{k=0}^{N}, \{w_k^i\}_{k=0}^{N}, \{v_k^i\}_{k=0}^{N} $ are independent of each other for all $i=1,\cdots,L$.}

For the sake of discussion, the following notions are denoted:
{\small\begin{align}\label{nota1}
 X_k&= vec(x_k^1,\cdots, x_k^L), U_k=vec(u^0_k, u_k^1, \cdots, u_k^L),\notag\\
 V_k&= vec(v_k^1, \cdots, v_k^L),
\mathbb{N}_L=\sum_{i=1}^{L}n_i, \mathbb{M}_L=\sum_{i=0}^{L}m_i,\notag\\
A&=\left[\hspace{-2mm}
  \begin{array}{cccc}
    A^1&\cdots&0\\
    \vdots&\ddots&\vdots\\
    0&\cdots&A^L
  \end{array}
\hspace{-2mm}\right],B=\left[\hspace{-2mm}
  \begin{array}{cccc}
    B^{10}&B^1&\cdots&0\\
    \vdots&\vdots&\ddots&\vdots\\
    B^{L0}&0&\cdots&B^L
  \end{array}
\hspace{-2mm}\right],\notag\\
\bar{\mathbf{A}}^i&=\left[\hspace{-2mm}
  \begin{array}{ccccc}
  \text{\Huge{0}}&&\cdots &&\text{\Huge{0}}\\
  &\ddots&&&\\
    \vdots&&\bar{A}^i &&\vdots\\
    &&&\ddots&\\
   \text{\Huge{0}}&&\cdots &&\text{\Huge{0}}
  \end{array}
\hspace{-2mm}\right]_{\mathbb{N}_L\times \mathbb{N}_L},\notag\\
\bar{\mathbf{B}}^i&=\left[\hspace{-2mm}
  \begin{array}{cccccc}
  \bar{B}^{10}&\text{\Huge{0}}&&\cdots &&\text{\Huge{0}}\\
  &&\ddots&&&\\
   \vdots& \vdots&&\bar{B}^i &&\vdots\\
    &&&&\ddots&\\
   \bar{B}^{L0}&\text{\Huge{0}}&&\cdots &&\text{\Huge{0}}
  \end{array}
\hspace{-2mm}\right]_{\mathbb{N}_L\times \mathbb{M}_L},\notag\\
\Gamma_k&=\left[\hspace{-2mm}
  \begin{array}{cccc}
  \gamma^1_kI_{n_1}&\cdots&0\\
    \vdots&\ddots&\vdots\\
    0&\cdots&\gamma^L_kI_{n_1}
  \end{array}
\hspace{-2mm}\right], p=\left[\hspace{-2mm}
  \begin{array}{cccc}
  p^1I_{n_1}&\cdots&0\\
    \vdots&\ddots&\vdots\\
    0&\cdots&p^LI_{n_L}
  \end{array}
\hspace{-2mm}\right].
\end{align}}
Using the notations in \eqref{nota1}, we can rewrite system \eqref{sm-1} as
\begin{align}\label{asys}
 X_{k+1}&=AX_k+B U_k+\sum_{i=1}^{L} w_k^i(\bar{\mathbf{A}}^iX_k+\bar{\mathbf{B}}^iU_k)+V_k
\end{align}
with initial $X_0=vec(x_0^1, \cdots, x_0^L)$.

Associated with system \eqref{sm-1}, the following cost function is introduced
\begin{align}\label{pi}
  J_N & \hspace{-1mm}=\hspace{-1mm}E\left[\sum_{k=0}^{N}(X^T_kQX_k
  \hspace{-1mm}+\hspace{-1mm}U^T_kRU_k)\hspace{-1mm}+\hspace{-1mm}X_{N+1}^TP_{N+1}X_{N+1}\right],
\end{align}
where $Q,R, P_{N+1}$ are symmetric weighting matrices of appropriate dimensions with
\begin{align}\label{nota2}
  Q&=\left[\hspace{-2mm}
  \begin{array}{cccc}
    Q^{11}&\cdots&Q^{1L}\\
    \vdots&\ddots&\vdots\\
    Q^{L1}&\cdots&Q^{LL}
  \end{array}
\hspace{-2mm}\right], R=\left[\hspace{-2mm}
  \begin{array}{cccc}
    R^{00}&\cdots&R^{0L}\\
    \vdots&\ddots&\vdots\\
   R^{L0}&\cdots&R^{LL}
  \end{array}
\hspace{-2mm}\right],\notag\\
P_{N+1}&=\left[\hspace{-2mm}
  \begin{array}{cccc}
    P_{N+1}^{11}&\cdots&P_{N+1}^{1L}\\
    \vdots&\ddots&\vdots\\
    P_{N+1}^{L1}&\cdots&P_{N+1}^{LL}
  \end{array}
\hspace{-2mm}\right],
\end{align}
and block matrices $Q^{ij}, P_{N+1}^{ij}\in\mathbb{R}^{n_i\times n_j}, i,j=1,\cdots,L$, $R^{ij}\in\mathbb{R}^{m_i\times m_j}, i,j=0,\cdots,L$.

Corresponding with the system model described in Figure 1, the feasibility of controllers $u_k^i, i=0,\cdots,L$ is given in the following assumption.
\begin{assumption}\label{ass1}
The remote controller $u^0_k$ is $\mathcal{F}^0_k$-measurable, and the local controller  $u^i_k$ is measurable with respect to $\mathcal{F}^i_k$, $i=1,\cdots,L$, where
\begin{align}
  \mathcal{F}^0_k&=\sigma\Big\{\{\Gamma_m\}_{m=0}^{k},
  \{\Gamma_mX_m\}_{m=0}^{k},
  \{u^0_m\}_{m=0}^{k-1}\Big\},\label{mwrt}\\
  \mathcal{F}^i_k&=\sigma\Big\{\{\Gamma_m\}_{m=0}^{k}, \{\Gamma_mX_m\}_{m=0}^{k},
  \{u^0_m\}_{m=0}^{k-1},\notag\\
  &~~~~~~~~~~~~~~~~~~~~~~~~~~~\text{and}~\{u^i_m\}_{m=0}^{k-1},\{x^i_m\}_{m=0}^{k}\Big\}.\label{mwrt2}
\end{align}
\end{assumption}

 It can be easily judged from \eqref{mwrt}-\eqref{mwrt2} that $\mathcal{F}^0_k\subset \mathcal{F}^i_k, i=1,\cdots,L$.

Next, we will introduce the LQ control problem to be solved in this paper.
\begin{problem}\label{prob1}
 For system \eqref{sm-1}-\eqref{asys}, find $\mathcal{F}^i_k$-measurable control $u_k^i$ to minimize cost function \eqref{pi}, where $k=0,\cdots,N, i=0,\cdots,L$.
\end{problem}

Throughout this paper, the assumption on the weighting matrices of \eqref{pi} is given as follows.
\begin{assumption}\label{ass2}
  $Q\geq 0, R>0$, and $P_{N+1}\geq 0$.
\end{assumption}

\begin{remark}
It is stressed that Problem \ref{prob1} has not been solved {in the existing literatures}. {The} previous works \cite{aon2018,am2019,lx2018,oan2016,nmt2013} mainly focused on additive noise {systems, and their} multiplicative noise counterpart remains less investigated. {The existence of unreliable uplink channels for multiplicative noise systems may result in the failure of  ``separation principle", making the design of optimal control {in Problem \ref{prob1}} difficult. Furthermore, the} maximum principle was adopted to solve only the single subsystem case in \cite{lx2018}, while the multiple subsystems case hasn't been solved in the framework of maximum principle. {The main challenge for the multiple subsystems case is that the solution for the G-FBSDEs is difficult.}
\end{remark}


\subsection{Necessary and Sufficient Solvability Conditions}

In this section, we will derive the necessary and sufficient solvability conditions for Problem \ref{prob1}.

\begin{lemma}\label{lem1}
Given system \eqref{sm-1}-\eqref{asys} and cost function \eqref{pi}, for $i=l,\cdots,L, k=0,\cdots,N$, let $u^{i,\varepsilon}_k=u^i_k+\varepsilon \delta u^i_k$, {where $\varepsilon\in \mathbb{R}^1$ and $\delta u_k^i$ is $\mathcal{F}_k^i$-measurable satisfying $\sum_{k=0}^{N}E[ (\delta u^i_k)^T\delta u^i_k]<+\infty$}. Denote $x^{i,\varepsilon}_k$ and $ J_N^\varepsilon$ the corresponding state and cost function {associated with} $u^{i,\varepsilon}_k$, $U^{\varepsilon}_k=vec(u^{0,\varepsilon}_k~\cdots~u^{L,\varepsilon}_k)$, $X^{\varepsilon}_k=vec(x^{0,\varepsilon}_k~\cdots~x^{L,\varepsilon}_k)$, and $\delta U_k=vec(\delta u_k^1, \cdots, \delta u_k^L)$. Then we have
\begin{align}\label{diff}
J_N^\varepsilon-J_N&=\varepsilon^\textbf{2}\Big\{\sum_{k=0}^{N}E\{Y_k^TQY_k+\delta U_kR\delta U_k\}\notag\\
  &~~~~~~~~+E[Y^T_{N+1}P_{N+1}Y_{N+1}]\Big\}\\
  &+2\varepsilon \sum_{k=0}^{N}E\{[\Theta^T_k(B+\sum_{i=1}^{L}w_k^i\bar{\mathbf{B}}^i)+RU_k]^T\delta U_k\},\notag
\end{align}
where {$Y_k$ satisfies the iteration}
\begin{align}\label{yk}
  Y_{k+1}&=[A+\sum_{i=1}^{L}w_k^i\bar{\mathbf{A}}^i]Y_k+[B+\sum_{i=1}^{L}w_k^i\bar{\mathbf{B}}^i]\delta U_k
\end{align}
with initial condition $Y_0=0$, {and} $\Theta_k=vec(\Theta_k^1,\cdots,\Theta_k^L)$ ($k=0,\cdots,N$) satisfies the iteration
\begin{align}\label{coss}
    \Theta_{k-1}&=E\left[(A+\sum_{i=1}^{L}w_k^i\bar{\mathbf{A}}^i)^T\Theta_k+QX_k\Bigg|\mathcal{G}_k\right],
  \end{align}
{with terminal condition $\Theta_N=P_{N+1}X_{N+1}$ and the information filtration $\mathcal{G}_k$ given by}
 \begin{align}\label{maxf}
    \mathcal{G}_k&\hspace{-1mm}=\hspace{-1mm}\sigma\Big\{\{\Gamma_m\}_{m=0}^{k},
  \{U_m\}_{m=0}^{k-1}, \{X_m\}_{m=0}^{k}\Big\}.
  \end{align}
\end{lemma}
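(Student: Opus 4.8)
The plan is to use a convex (linear) perturbation of the control and to exploit the fact that the dynamics \eqref{asys} are affine in $(X_k,U_k)$ while the cost \eqref{pi} is quadratic, so that the cost difference is \emph{exactly} a quadratic polynomial in $\varepsilon$ with no higher-order remainder. First I would observe that, because \eqref{asys} is linear and the perturbation enters additively, the perturbed trajectory satisfies $X_k^\varepsilon=X_k+\varepsilon Y_k$ identically, where $Y_k$ solves the variational recursion \eqref{yk} with $Y_0=0$; this is verified by subtracting the two copies of \eqref{asys} and using $U_k^\varepsilon-U_k=\varepsilon\,\delta U_k$. Admissibility of $u_k^{i,\varepsilon}=u_k^i+\varepsilon\delta u_k^i$ is immediate since $\delta u_k^i$ is $\mathcal{F}_k^i$-measurable.

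Next I would substitute $X_k^\varepsilon=X_k+\varepsilon Y_k$ and $U_k^\varepsilon=U_k+\varepsilon\delta U_k$ into \eqref{pi} and expand every quadratic form, e.g. $(X_k+\varepsilon Y_k)^TQ(X_k+\varepsilon Y_k)=X_k^TQX_k+2\varepsilon X_k^TQY_k+\varepsilon^2Y_k^TQY_k$, and similarly for the $R$- and $P_{N+1}$-terms. Collecting powers of $\varepsilon$ immediately produces the $\varepsilon^{\textbf{2}}$-contribution in \eqref{diff}. The remaining task is to show that the first-variation term $2\varepsilon\big\{\sum_{k=0}^{N}E[X_k^TQY_k+U_k^TR\delta U_k]+E[X_{N+1}^TP_{N+1}Y_{N+1}]\big\}$ coincides with the $\delta U_k$-expression in \eqref{diff}.

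For this I would introduce the adjoint sequence $\Theta_k$ defined by \eqref{coss} with terminal value $\Theta_N=P_{N+1}X_{N+1}$, and perform a discrete summation by parts (Abel transformation) on $\sum_{k=0}^{N}E[\Theta_k^TY_{k+1}-\Theta_{k-1}^TY_k]$. Using the forward recursion \eqref{yk} for $Y_{k+1}$ and the backward recursion \eqref{coss} for $\Theta_{k-1}$, the telescoping sum collapses to $E[\Theta_N^TY_{N+1}]$ (the lower boundary term vanishes because $Y_0=0$), and the terminal condition $\Theta_N=P_{N+1}X_{N+1}$ makes it cancel against the $E[X_{N+1}^TP_{N+1}Y_{N+1}]$ piece, leaving exactly $\sum_{k}E[\Theta_k^T(B+\sum_i w_k^i\bar{\mathbf B}^i)\delta U_k]+\sum_{k}E[U_k^TR\delta U_k]$, which is the claimed coefficient of $2\varepsilon$.

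The hard part is justifying this telescoping in the presence of the conditional expectation in \eqref{coss}. When I replace $\Theta_{k-1}$ by $E[\xi_k\mid\mathcal{G}_k]$ with $\xi_k=(A+\sum_i w_k^i\bar{\mathbf A}^i)^T\Theta_k+QX_k$ inside $E[\Theta_{k-1}^TY_k]$, I would like to drop the conditioning and pair $\xi_k$ directly with $Y_k$; the naive justification ``$Y_k$ is $\mathcal{G}_k$-measurable'' \emph{fails}, because the multiplicative noises $w_m^i$ ($m<k$) entering $Y_k$ cannot be recovered from $\mathcal{G}_k$ (they are confounded by the additive noises $v_m^i$). The correct argument is that the innovation $\xi_k-E[\xi_k\mid\mathcal{G}_k]$ is, conditionally on $\mathcal{G}_k$, a function of the noises $\{w_m^i,v_m^i\}_{m\ge k}$ only (since $X_k,U_k$ and all past data are $\mathcal{G}_k$-measurable, the remaining randomness in $\xi_k$ propagates only through present/future terms), whereas $Y_k$ carries the past noises $\{w_m^i\}_{m<k}$ together with the $\mathcal{G}_k$-measurable perturbations $\delta U_0,\dots,\delta U_{k-1}$ (here I would verify $\mathcal{F}_m^i\subset\mathcal{G}_k$ and hence $\delta U_m,U_m\in\mathcal{G}_k$ for $m\le k-1$ from \eqref{mwrt}--\eqref{mwrt2}). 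By the assumed mutual independence of $x_0,\{\gamma_k^i\},\{w_k^i\},\{v_k^i\}$ across time, the future innovation is independent of the past noises in $Y_k$; combined with $E[\xi_k-E[\xi_k\mid\mathcal{G}_k]\mid\mathcal{G}_k]=0$, the cross term $E[(\xi_k-E[\xi_k\mid\mathcal{G}_k])^TY_k]$ vanishes. This ``independence of future innovations from past noises'' mechanism is precisely why the conditioning $E[\cdot\mid\mathcal{G}_k]$ is built into \eqref{coss}, and it is the step I expect to require the most care.
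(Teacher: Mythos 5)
Your proposal is correct and follows the same overall route as the paper's proof: write $X_k^\varepsilon=X_k+\varepsilon Y_k$ with $Y_k$ solving \eqref{yk}, expand the quadratic cost exactly in powers of $\varepsilon$, introduce the costate \eqref{coss}, and collapse the first-variation term by summation by parts against the terminal condition $\Theta_N=P_{N+1}X_{N+1}$. The one place where you genuinely diverge is the justification of the key cancellation $E\bigl[\bigl(\xi_k-E[\xi_k\mid\mathcal{G}_k]\bigr)^TY_k\bigr]=0$ with $\xi_k=(A+\sum_i w_k^i\bar{\mathbf{A}}^i)^T\Theta_k$: the paper simply asserts that ``$Y_k$ is $\mathcal{G}_k$-measurable,'' whereas you correctly observe that this assertion is false in the multiplicative-noise setting (already $Y_1=[B+\sum_i w_0^i\bar{\mathbf{B}}^i]\delta U_0$ depends on $w_0^i$, which cannot be recovered from $\mathcal{G}_1$ because it is confounded with $V_0$ in the state recursion), and you replace it with the conditional-independence argument: $\Theta_k$ is $\mathcal{G}_{k+1}$-measurable and $U_k$ is $\mathcal{G}_k$-measurable, so the innovation $\xi_k-E[\xi_k\mid\mathcal{G}_k]$ is driven only by noises at times $\geq k$ (including $\gamma_{k+1}^i$, which you should add to your list alongside $\{w_m^i,v_m^i\}_{m\geq k}$), while $Y_k$ is measurable with respect to $\sigma(\mathcal{G}_k,\{w_m^i\}_{m<k})$; mutual independence of the noise sequences then kills the cross term. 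This buys a rigorous proof of exactly the identity the paper states; the paper's shortcut happens to be harmless only because it is a stand-in for precisely this independence mechanism (and would be literally valid only in the additive-noise case $\bar{A}^i=\bar{B}^i=0$). In short, your argument is the same proof with the one loose step properly repaired.
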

\begin{proof}
  By setting $Y_k=\frac{X^\varepsilon_k-X_k}{\varepsilon}$, we know that $Y_0=0$ and \eqref{yk} holds.

  Subsequently, it can be calculated {that}
  \begin{align*}
     & J_N^\varepsilon-J_N\notag\\
     &=\sum_{k=0}^{N}E\Big[[X_k+\varepsilon Y_k]^TQ[X_k+\varepsilon Y_k]\notag\\
     &+[U_k+\varepsilon \delta U_k]^TR[U_k+\varepsilon \delta U_k]\Big]\notag\\
     &+E[X_{N+1}+\varepsilon Y_{N+1}]^TP_{N+1}[X_{N+1}+\varepsilon Y_{N+1}]\notag\\
     &-EX^T_{N+1}P_{N+1}X_{N+1}
     -\sum_{k=0}^{N}E\Big[X^T_kQX_k+U^T_kRU_k\Big]\notag\\
     &=2\varepsilon E[\sum_{k=0}^{N}[X^T_kQY_k+\delta U^T_kRU_k]
     +Y^T_{N+1}P_{N+1}X_{N+1} ]\notag\\
     &+\varepsilon^\textbf{2}E[\sum_{k=0}^{N}[Y^T_kQY_k+\delta U^T_kR \delta U_k]+Y^T_{N+1}P_{N+1}Y_{N+1}].
  \end{align*}

  {Using \eqref{yk}-\eqref{coss}, and noting $Y_k$ is $\mathcal{G}_k$-measurable}, we have
  \begin{align*}
   &E[\sum_{k=0}^{N}[X^T_kQY_k+\delta U^T_kRU_k]
   +Y^T_{N+1}P_{N+1}X_{N+1} ]\notag\\
     &=E\Big[\sum_{k=0}^{N}\{\Theta_{k-1}-E[(A+\sum_{i=1}^{L}w_k^i\bar{\mathbf{A}}^i)^T\Theta_k|\mathcal{G}_k]\}^TY_k\notag\\
     &+\delta U^T_kRU_k+\Theta^T_NY_{N+1} \Big]\notag\\
     &=E\Big[\sum_{k=0}^{N}[\Theta^T_k(B+\sum_{i=1}^{L}w_k^i\bar{\mathbf{B}}^i)+RU_k]^T\delta U_k\Big],
  \end{align*}
  which ends the proof.
\end{proof}

Accordingly, we have the following results.
\begin{theorem}\label{th1}
Under Assumptions \ref{ass1} and \ref{ass2}, Problem \ref{prob1} can be uniquely solved if and only if the equilibrium condition
  \begin{align}\label{coseq}
    0&=RU_k+E\left[(B+\sum_{i=1}^{L}w_k^i\bar{\mathbf{B}}^i)^T\Theta_k\Bigg|\mathcal{G}_k \right]
  \end{align}
  {can be uniquely solved,} where the costate $\Theta_k$ satisfies \eqref{coss}.
\end{theorem}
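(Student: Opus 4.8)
The plan is to base the entire argument on the variational identity \eqref{diff} of Lemma \ref{lem1}, which already decomposes the cost increment as $J_N^\varepsilon-J_N=\varepsilon^2\mathcal{Q}(\delta U)+2\varepsilon\mathcal{L}(\delta U)$, where $\mathcal{Q}(\delta U):=\sum_{k=0}^N E[Y_k^TQY_k+\delta U_k^TR\delta U_k]+E[Y_{N+1}^TP_{N+1}Y_{N+1}]$ is the quadratic part and $\mathcal{L}(\delta U):=\sum_{k=0}^N E[((B+\sum_{i=1}^L w_k^i\bar{\mathbf{B}}^i)^T\Theta_k+RU_k)^T\delta U_k]$ is the first-order part. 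The first observation is that Assumption \ref{ass2} convexifies the functional: since $Q\geq0$, $P_{N+1}\geq0$ and $R>0$, every summand of $\mathcal{Q}(\delta U)$ is nonnegative, so $\mathcal{Q}(\delta U)\geq0$ for all admissible variations, and moreover $\mathcal{Q}(\delta U)>0$ whenever $\delta U\not\equiv0$ because of the strictly positive contribution $\sum_k E[\delta U_k^TR\delta U_k]$. Consequently the problem reduces to analysing when the scalar parabola $\varepsilon\mapsto\varepsilon^2\mathcal{Q}+2\varepsilon\mathcal{L}$ is nonnegative for every $\varepsilon\in\mathbb{R}$.

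For sufficiency I would assume the equilibrium condition \eqref{coseq} holds and show that $\mathcal{L}(\delta U)=0$ for every admissible $\delta U$. The key step exploits the nesting $\mathcal{F}_k^0\subset\mathcal{F}_k^i\subset\mathcal{G}_k$: since each $\delta u_k^i$ is $\mathcal{F}_k^i$-measurable and $U_k$ is admissible, both $\delta U_k$ and $U_k$ are $\mathcal{G}_k$-measurable, so the tower property lets me rewrite $\mathcal{L}(\delta U)=\sum_k E[(E[(B+\sum w_k^i\bar{\mathbf{B}}^i)^T\Theta_k\mid\mathcal{G}_k]+RU_k)^T\delta U_k]$, which vanishes termwise by \eqref{coseq}. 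Then $J_N^\varepsilon-J_N=\varepsilon^2\mathcal{Q}(\delta U)\geq0$, so $U$ is optimal, and because $\mathcal{Q}(\delta U)>0$ for $\delta U\not\equiv0$ the minimiser is unique.

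For necessity I would reverse the argument: optimality of $U$ forces $\varepsilon^2\mathcal{Q}+2\varepsilon\mathcal{L}\geq0$ for all $\varepsilon$, and dividing by $\varepsilon$ and letting $\varepsilon\downarrow0$ and $\varepsilon\uparrow0$ yields $\mathcal{L}(\delta U)=0$ for every admissible $\delta U$. The remaining task is to pass from the integrated identity $\mathcal{L}(\delta U)=0$ to the pointwise condition \eqref{coseq}; this is a fundamental-lemma-of-the-calculus-of-variations step, executed by conditioning the gradient on the relevant filtration and choosing $\delta U_k$ so as to isolate each block index $i$ and each time $k$. Unique solvability of \eqref{coseq} is then tied back to the strict convexity ($R>0$) exactly as in the sufficiency direction.

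I expect the genuine difficulty to lie in the necessity direction, specifically in reconciling the asymmetric information structure with the $\mathcal{G}_k$-conditioning written in \eqref{coseq}. Because the admissible variations $\delta u_k^i$ are only $\mathcal{F}_k^i$-measurable rather than $\mathcal{G}_k$-measurable, the bare variational argument produces the block-wise conditions $E[\text{(grad)}_k^i\mid\mathcal{F}_k^i]=0$ with $\text{(grad)}_k:=(B+\sum w_k^i\bar{\mathbf{B}}^i)^T\Theta_k+RU_k$, and one must argue that, together with the $\mathcal{G}_k$-measurability of the costate $\Theta_k$ and of $U_k$, these are the correct reading of \eqref{coseq} as an equation to be solved for the admissible $U_k$. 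Handling this nesting of filtrations carefully---and verifying that ``the equilibrium condition \eqref{coseq} can be uniquely solved'' is the faithful surrogate for unique solvability of Problem \ref{prob1}---is the crux of the proof; by comparison the convexity bookkeeping and the tower-property manipulations are routine.
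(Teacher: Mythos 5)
Your overall strategy is the same as the paper's: both arguments rest entirely on the variational identity \eqref{diff}, split the cost increment into a quadratic part $\delta J_N\geq 0$ (guaranteed by Assumption \ref{ass2}) and a first-order part, and characterize optimality by the vanishing of the first-order part. Your sufficiency direction is complete and coincides with the paper's (which likewise concludes $J_N^\varepsilon-J_N=\varepsilon^{2}\delta J_N\geq 0$ once \eqref{coseq} holds, with uniqueness supplied by $R>0$).

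The gap is in your necessity direction: you reduce it to ``a fundamental-lemma-of-the-calculus-of-variations step,'' correctly observe that admissible variations only yield the block-wise conditions $E[(\mathrm{grad})_k^i\mid\mathcal{F}_k^i]=0$ rather than the $\mathcal{G}_k$-conditioned equation \eqref{coseq}, declare this reconciliation the crux --- and then stop without performing it. A proof must actually bridge that step. The paper bridges it by a concrete choice of perturbation: setting $\tau_k:=RU_k+E[(B+\sum_{i}w_k^i\bar{\mathbf{B}}^i)^T\Theta_k\mid\mathcal{G}_k]$ and taking $\delta U_k=\tau_k$, so that (since $\tau_k$ is $\mathcal{G}_k$-measurable and $E[(\mathrm{grad})_k\mid\mathcal{G}_k]=\tau_k$) the first-order term becomes $2\varepsilon\sum_{k}E[\tau_k^T\tau_k]$, and a small $\varepsilon<0$ contradicts optimality unless $\tau_k=0$ a.s. Note that this is precisely where your stated worry bites: for $\delta U_k=\tau_k$ to be an admissible variation, its $i$-th block must be $\mathcal{F}_k^i$-measurable, which a generic $\mathcal{G}_k$-measurable vector is not; so you have identified a genuine subtlety that the paper's own proof treats loosely. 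But your proposal supplies neither the paper's contradiction-by-perturbation argument nor any alternative justification that the conditions $E[(\mathrm{grad})_k^i\mid\mathcal{F}_k^i]=0$ upgrade to \eqref{coseq}. As written, the necessity half is an announced plan rather than a proof.
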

\begin{proof}
`Necessity': Suppose Problem \ref{prob1} is uniquely {solvable and} $U_k=vec(u^{1}_k, \cdots, u_k^L)$ are optimal controls for $k=0,\cdots,N$.

Using the symbols in Lemma \ref{lem1} and from \eqref{diff} we know that, for arbitrary $\delta U_k$ and $\varepsilon\in \mathbb{R}$, there holds
\begin{align}\label{diff2}
J_N^\varepsilon-J_N&=\varepsilon^\textbf{2}\delta J_N\notag\\
  &+2\varepsilon \sum_{k=0}^{N}E\Big[[\Theta^T_k(B+\sum_{i=1}^{L}w_k^i\bar{\mathbf{B}}^i)+RU_k]^T\delta U_k\Big]\notag\\
  &\geq 0,
\end{align}
where
\begin{align}\label{deljn}
  \delta J_N&=\sum_{k=0}^{N}E\Big[Y_k^TQY_k+\delta U_kR\delta U_k\Big]\notag\\
  &~~~~~~~~+E[Y^T_{N+1}P_{N+1}Y_{N+1}].
\end{align}

{Observe from Assumption \ref{ass2} that} $\delta J_N\geq 0$. Next we will show \eqref{coseq} holds.

Suppose, {by contradiction, that \eqref{coseq} is not satisfied. Let}
  \begin{align} \label{u11}
    &RU_k+E\left[[B+\sum_{i=1}^{L}w_k^i\bar{\mathbf{B}}^i]^T\Theta_k\Bigg|\mathcal{G}_k\right]
    =\tau_k \neq 0.
  \end{align}
  In this case, if we choose $\delta U_k=\tau_k$, then from \eqref{diff2} we have
  \begin{align*}
  J_N^\varepsilon-J_N= 2\varepsilon\sum_{k=0}^{N}
   \tau_k^T\tau_k +\varepsilon^2\delta J_N.
  \end{align*}
{Note that} we can always find some $\varepsilon<0$ such that {$J_N^\varepsilon-J_N<0$}, which contradicts with \eqref{diff2}. Thus, $\tau_k=0$. This ends the necessity proof.

`Sufficiency': If \eqref{coseq} is uniquely solvable, we shall show that Problem \ref{prob1} is uniquely {solvable} under Assumptions \ref{ass1}-\ref{ass2}.

Actually, from \eqref{diff} we know that for any $\varepsilon\in\mathbb{R}$, $J_N^\varepsilon-J_N=\varepsilon^2\delta J_N\geq 0$, which means that Problem \ref{prob1} is uniquely solvable.
\end{proof}

It is noted that system dynamics \eqref{sm-1} and \eqref{asys} are forward, and the costate equation \eqref{coss} is backward, then \eqref{sm-1}, \eqref{asys}, \eqref{coss} and \eqref{coseq} constitute the G-FBSDEs. For the convenience of discussion, we denote the following G-FBSDEs composed of \eqref{sm-1}, \eqref{asys}, \eqref{coss} and \eqref{coseq}:
{\small\begin{equation}\label{gfbs}
\left\{ \begin{array}{ll}
  x_{k+1}^i&=
  [A^i+w^i_k\bar{A}^i]x^i_k+[B^{i}+w^i_k\bar{B}^{i}]u^i_k\\
  &+[B^{i0}+w^i_k\bar{B}^{i0}]u^0_k+v^i_k,~x_0^i, i=1,\cdots,L,\\
  X_{k+1}&=(A+\sum_{i=1}^{L}w_k^i\bar{\mathbf{A}}^i)X_k\\
  &+(B+\sum_{i=1}^{L}w_k^i\bar{\mathbf{B}}^i)U_k+V_k,\\
  \Theta_{k-1}&=E\left[(A+\sum_{i=1}^{L}w_k^i\bar{\mathbf{A}}^i)^T\Theta_k+QX_k|\mathcal{G}_k\right],\\
  \Theta_N&=P_{N+1}X_{N+1},\\
  0&=RU_k+E\left[(B+\sum_{i=1}^{L}w_k^i\bar{\mathbf{B}}^i)^T\Theta_k\Big|\mathcal{G}_k \right].
\end{array} \right.
\end{equation}}

\begin{remark}
  The necessary and sufficient solvability conditions of Problem \ref{prob1} {given in Theorem 1} are presented for the first time, which are based on the solution to G-FBSDEs \eqref{gfbs}. Consequently, to derive the optimal control strategies $u_k^i, i=0\cdots,L, k=0,\cdots,N$, we will {find a} method of decoupling G-FBSDEs \eqref{gfbs}.
\end{remark}

Consequently, we will introduce some preliminary results.
\begin{lemma}\label{lem3}
{Denote} $\hat{u}_k^i=E[u_k^i|\mathcal{F}_k^0]$, then the following relationship holds:

  \begin{align}\label{rela1}
E[u_k^j|\mathcal{G}_k^i]=\left\{ \begin{array}{ll}
\hat{u}_k^i,&j=i,\\
u_k^j, &j\neq i,
\end{array} \right.
  \end{align}
  where the information filtration $\mathcal{G}_k^i$ is given by
  \begin{align}\label{hki}
   \mathcal{G}_k^i&=\sigma\Big\{\{\Gamma_m\}_{m=0}^{k},
  \{\Gamma_mX_m\}_{m=0}^{k},\{U_m\}_{m=0}^{k-1},\\
  &~~~~~~~\text{and}~\{x_m^j\}_{m=0}^{k},j=1,\cdots,i-1,i+1,\cdots,L\Big\}.\notag
  \end{align}
\end{lemma}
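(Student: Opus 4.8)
The plan is to treat the two cases $j\neq i$ and $j=i$ separately: the first is a pure measurability statement, while the second is a conditional-independence statement that carries all the content of the lemma.

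For $j\neq i$ I would argue directly that $\mathcal{F}_k^j\subseteq\mathcal{G}_k^i$ by matching generators. The common data $\{\Gamma_m\}_{m=0}^{k}$ and $\{\Gamma_mX_m\}_{m=0}^{k}$ appear in both filtrations; the past inputs $\{u_m^0\}_{m=0}^{k-1}$ and $\{u_m^j\}_{m=0}^{k-1}$ required by $\mathcal{F}_k^j$ are contained in $\{U_m\}_{m=0}^{k-1}$; and the private history $\{x_m^j\}_{m=0}^{k}$ is one of the trajectories explicitly retained in $\mathcal{G}_k^i$ precisely because $j\neq i$. Since $u_k^j$ is $\mathcal{F}_k^j$-measurable by Assumption \ref{ass1}, it is then $\mathcal{G}_k^i$-measurable, so $E[u_k^j|\mathcal{G}_k^i]=u_k^j$. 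This settles the easy case.

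For $j=i$ the claim is $E[u_k^i|\mathcal{G}_k^i]=E[u_k^i|\mathcal{F}_k^0]=\hat{u}_k^i$. Since $\mathcal{F}_k^0\subseteq\mathcal{G}_k^i$, I would write $\mathcal{G}_k^i=\mathcal{F}_k^0\vee\mathcal{C}_k^i$, where $\mathcal{C}_k^i$ collects the extra information carried by the other subsystems' trajectories $\{x_m^j\}_{m=0}^{k}$, $j\neq i$. The strategy is to invoke the standard reduction property of conditional expectation: if the incremental $\sigma$-algebra $\mathcal{C}_k^i$ is conditionally independent of $u_k^i$ given $\mathcal{F}_k^0$, then conditioning on $\mathcal{G}_k^i$ collapses to conditioning on $\mathcal{F}_k^0$. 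The engine for this conditional independence is the structure of the dynamics \eqref{sm-1}: subsystem $i$ is coupled to the remaining subsystems only through the remote control $u_k^0$, which is $\mathcal{F}_k^0$-measurable, together with the mutual independence of the primitive data $x_0,\{\gamma_k^i\},\{w_k^i\},\{v_k^i\}$ across $i$ assumed in the problem formulation. Once $u_k^0$ and the realized channel variables are frozen by conditioning on $\mathcal{F}_k^0$, the private randomness driving $x^i$ is independent of that driving $\{x^j:j\neq i\}$, so $u_k^i$, being a function of subsystem $i$'s trajectory together with the common data, is conditionally independent of $\mathcal{C}_k^i$ given $\mathcal{F}_k^0$.

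The main obstacle is making this conditional-independence step airtight, because $\mathcal{G}_k^i$ contains two pieces of information that are not manifestly harmless: the full past control vectors $\{U_m\}_{m=0}^{k-1}$, which include subsystem $i$'s own past inputs $\{u_m^i\}$, and the partial observations $\{\Gamma_mX_m\}_{m=0}^{k}$. Both could, a priori, leak information about the unobserved states $\{x_m^i:\gamma_m^i=0\}$ and hence about $u_k^i$, which would obstruct the reduction to $\mathcal{F}_k^0$. I would therefore proceed by induction on $k$, using at each step the $\mathcal{F}_m^i$-measurability of $u_m^i$ to peel off its common part $\mathcal{F}_m^0$ and exhibit its private content as living only on subsystem $i$'s own independent noise coordinates; this shows that the genuinely new content of $\mathcal{C}_k^i$ relative to $\mathcal{F}_k^0$ sits on the independent coordinates of the other subsystems and is thus orthogonal to $u_k^i-\hat{u}_k^i$. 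This is the step that requires the full strength of the independence assumptions, and it is the point from which the separation structure exploited later in the paper ultimately derives.
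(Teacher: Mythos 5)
Your treatment of the case $j\neq i$ is correct and is exactly what the paper needs there: every generator of $\mathcal{F}_k^j$ in \eqref{mwrt2} appears in the generator list of $\mathcal{G}_k^i$ in \eqref{hki} when $j\neq i$, so $u_k^j$ is $\mathcal{G}_k^i$-measurable and the conditional expectation acts as the identity. For $j=i$, your strategy --- reduce $E[u_k^i|\mathcal{G}_k^i]$ to $E[u_k^i|\mathcal{F}_k^0]$ by showing the incremental $\sigma$-algebra $\mathcal{C}_k^i$ is conditionally independent of $u_k^i$ given $\mathcal{F}_k^0$, using the mutual independence of the primitive random variables across subsystems --- is the same argument the paper compresses into a single sentence (``independence of $u_k^i$ and $\{x_m^j\}$, $\{u_m^j\}$, $j\neq i$, plus properties of conditional expectation''). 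You are in fact more careful than the paper, because you explicitly flag the generators of $\mathcal{G}_k^i$ not covered by that independence claim: $\{\Gamma_mX_m\}_{m=0}^{k}$ (harmless, since it already sits inside $\mathcal{F}_k^0$) and, crucially, $\{U_m\}_{m=0}^{k-1}$, which contains subsystem $i$'s own past inputs $\{u_m^i\}_{m=0}^{k-1}$.

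The difficulty is that the induction you propose does not close the gap you yourself identified, and at the stated level of generality it cannot. The concluding claim of your induction --- that the genuinely new content of $\mathcal{C}_k^i$ relative to $\mathcal{F}_k^0$ lives only on the independent coordinates of the other subsystems --- is precisely what fails: the private part $u_m^i-\hat{u}_m^i$ of a past input is a function of $\{x_s^i\}_{s\le m}$, it is $\mathcal{G}_k^i$-measurable (because $\hat{u}_m^i$ is $\mathcal{F}_m^0$-measurable and $u_m^i$ is a component of $U_m$), and it is supported on subsystem $i$'s own noise coordinates. Concretely, take an admissible policy with $u_{k-1}^i=Cx_{k-1}^i$ and $u_k^i=Cx_{k-1}^i$ for a fixed nonzero matrix $C$ of compatible dimensions; both are measurable with respect to the required filtrations in Assumption~\ref{ass1}, yet on the event $\gamma_{k-1}^i=0$ one has $E[u_k^i|\mathcal{G}_k^i]=Cx_{k-1}^i$ while $\hat{u}_k^i=CE[x_{k-1}^i|\mathcal{F}_k^0]\neq Cx_{k-1}^i$ with positive probability, so \eqref{rela1} is violated for this policy. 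To make the $j=i$ case airtight you must either remove $\{u_m^i\}_{m=0}^{k-1}$ from the generator list of $\mathcal{G}_k^i$ (keeping only the remote input and the inputs of subsystems $j\neq i$, which are functions of the remaining generators anyway), or restrict the admissible class --- e.g.\ to controls of the candidate optimal form $u_m^i=\mathcal{I}^i\hat{U}_m+\tilde{u}_m^i$ and verify the orthogonality of $u_k^i-\hat{u}_k^i$ to $\sigma(\{\tilde{u}_m^i\}_{m<k})$ directly inside the induction of Theorem~\ref{th2}. Note that the paper's own one-line proof does not address this term either; your write-up should state explicitly which repair it adopts rather than asserting the conclusion of the induction.
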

\begin{proof}
  Due to the independence of $u_k^i$ and $\{x_m^j\}_{m=0}^{k}$, $\{u_m^j\}_{m=0}^{k}, j\neq i$, \eqref{rela1} can be obtained by using the properties of conditional expectation.
\end{proof}


By using Lemma \ref{lem3}, the following result can be derived.
\begin{lemma}\label{lem4}
Under Assumptions \ref{ass1} and \ref{ass2}, the equilibrium condition \eqref{coseq} can be rewritten as:
\begin{align}
  0&=R\hat{U}_k+E[(B+\sum_{i=1}^{L}w_k^i\bar{\mathbf{B}}^i)^T\Theta_k|\mathcal{F}^0_k],\label{adeqs}\\
  0 &=R\tilde{U}_{k}+E[(B+\sum_{i=1}^{L}w_k^i\bar{\mathbf{B}}^i)^T\Theta_{k}|\mathcal{G}_{k}]\notag\\
    &~~~~~~~~~~~~~-E[(B+\sum_{i=1}^{L}w_k^i\bar{\mathbf{B}}^i)^T\Theta_{k}|\mathcal{F}^0_{k}], \label{2adeqs}\\
  0&=R^{ii}\tilde{u}_k^i+E[(B^i+w_k^i\bar{B}^i)^T\Theta_k^i|\mathcal{G}_k]\notag\\
  &~~~~~~~~~~~~~-E[(B^i+w_k^i\bar{B}^i)^T\Theta_k^i|\mathcal{G}_k^i],\label{adeqs2}
\end{align}
where $ \hat{U}_k=vec(u^0_k,\hat{u}^1_k,\cdots, \hat{u}^L_k)$, $\tilde{U}_k=U_k-\hat{U}_k$ and $ \Theta_k=vec(\Theta^0_k,\Theta^1_k,\cdots, \Theta^L_k)$ satisfies \eqref{coss}.
\end{lemma}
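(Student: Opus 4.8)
The plan is to peel the three identities off the single equilibrium condition \eqref{coseq} one at a time, using only the nesting $\mathcal{F}^0_k\subseteq\mathcal{G}^i_k\subseteq\mathcal{G}_k$ of filtrations together with the conditional-independence relation \eqref{rela1} of Lemma \ref{lem3}. Throughout I use that $U_k$ is $\mathcal{G}_k$-measurable (each $u^i_k$ is $\mathcal{F}^i_k\subseteq\mathcal{G}_k$-measurable and $u^0_k$ is $\mathcal{F}^0_k\subseteq\mathcal{G}_k$-measurable), and that $\hat U_k=E[U_k|\mathcal{F}^0_k]$, since $E[u^0_k|\mathcal{F}^0_k]=u^0_k$ and $E[u^i_k|\mathcal{F}^0_k]=\hat u^i_k$ by definition.

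First I would derive \eqref{adeqs} by applying $E[\,\cdot\,|\mathcal{F}^0_k]$ to \eqref{coseq}. The term $RU_k$ becomes $R\hat U_k$, and because $\mathcal{F}^0_k\subseteq\mathcal{G}_k$ the tower property collapses the nested conditional expectation of $(B+\sum_{i}w^i_k\bar{\mathbf{B}}^i)^T\Theta_k$ to a single conditioning on $\mathcal{F}^0_k$, which is exactly \eqref{adeqs}. Subtracting \eqref{adeqs} from \eqref{coseq} and setting $\tilde U_k=U_k-\hat U_k$ then gives \eqref{2adeqs} directly; its remote block vanishes automatically because the remote entry of $\hat U_k$ equals $u^0_k$.

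The substantive step is the per-subsystem identity \eqref{adeqs2}. I would first record the block computation that, for the stacked matrices in \eqref{nota1}, the $i$-th local component of $(B+\sum_{l}w^l_k\bar{\mathbf{B}}^l)^T\Theta_k$ equals $(B^i+w^i_k\bar{B}^i)^T\Theta^i_k$: only the $(i,i)$ block $B^i$ of $B$ contributes to that row, and among the $\bar{\mathbf{B}}^l$ only $l=i$ has support in row block $i$, contributing $w^i_k(\bar{B}^i)^T\Theta^i_k$. Extracting the $i$-th local row of \eqref{2adeqs} then yields $\sum_{j=1}^{L}R^{ij}\tilde u^j_k$ plus the difference $E[(B^i+w^i_k\bar{B}^i)^T\Theta^i_k|\mathcal{G}_k]-E[(B^i+w^i_k\bar{B}^i)^T\Theta^i_k|\mathcal{F}^0_k]$. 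To strip away the cross terms I would apply $E[\,\cdot\,|\mathcal{G}^i_k]$ to this row and subtract. By \eqref{rela1} one has $E[\tilde u^i_k|\mathcal{G}^i_k]=\hat u^i_k-\hat u^i_k=0$, which annihilates the diagonal term, while $E[\tilde u^j_k|\mathcal{G}^i_k]=\tilde u^j_k$ for $j\neq i$ leaves the off-diagonal terms untouched; the inclusions $\mathcal{F}^0_k\subseteq\mathcal{G}^i_k\subseteq\mathcal{G}_k$ reduce the first conditional expectation to one on $\mathcal{G}^i_k$ and keep the $\mathcal{F}^0_k$-term fixed. Subtracting cancels every off-diagonal $R^{ij}\tilde u^j_k$ together with both $\mathcal{F}^0_k$-terms, leaving precisely $R^{ii}\tilde u^i_k+E[(B^i+w^i_k\bar{B}^i)^T\Theta^i_k|\mathcal{G}_k]-E[(B^i+w^i_k\bar{B}^i)^T\Theta^i_k|\mathcal{G}^i_k]$, which is \eqref{adeqs2}.

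The main obstacle I anticipate is the bookkeeping in this last step: verifying that the coupled (non-block-diagonal) weight $R$ interacts with the partial filtration $\mathcal{G}^i_k$ exactly so that Lemma \ref{lem3} annihilates the $j=i$ term while every cross term $R^{ij}\tilde u^j_k$ ($j\neq i$) both survives the $\mathcal{G}^i_k$-conditioning and then cancels upon subtraction. Once the inclusions $\mathcal{F}^0_k\subseteq\mathcal{G}^i_k\subseteq\mathcal{G}_k$ and the block identity above are in hand, the remaining reductions are routine applications of the tower property.
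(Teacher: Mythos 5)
Your proposal is correct and follows the same route the paper intends: the paper's proof of Lemma \ref{lem4} is a one-line appeal to Lemma \ref{lem3}, and your argument (projecting \eqref{coseq} onto $\mathcal{F}^0_k$, subtracting, then conditioning the $i$-th row on $\mathcal{G}^i_k$ and using \eqref{rela1} to kill the diagonal term while the cross terms $R^{ij}\tilde u^j_k$ cancel) is exactly the elaboration of that one-liner, with the block identity for $(B+\sum_l w^l_k\bar{\mathbf{B}}^l)^T\Theta_k$ correctly verified. The only loose phrase is the aside that the remote block of \eqref{2adeqs} ``vanishes automatically'' --- only the remote entry of $R\tilde U_k$'s diagonal contribution is zero, not the whole remote row --- but this does not affect the derivation.
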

\begin{proof}
The results can be easily derived from Lemma \ref{lem3}.
\end{proof}

In view of Lemma \ref{lem4}, the G-FBSDEs \eqref{gfbs} can be equivalently presented as
{\small\begin{equation}\label{gfbs2}
\left\{ \begin{array}{ll}
  x_{k+1}^i&=
  [A^i+w^i_k\bar{A}^i]x^i_k+[B^{i}+w^i_k\bar{B}^{i}]u^i_k\\
  &+[B^{i0}+w^i_k\bar{B}^{i0}]u^0_k+v^i_k,\\
  X_{k+1}&=(A+\sum_{i=1}^{L}w_k^i\bar{\mathbf{A}}^i)X_k\\
  &+(B+\sum_{i=1}^{L}w_k^i\bar{\mathbf{B}}^i)U_k+V_k,\\
  \Theta_{k-1}&=E\left[(A+\sum_{i=1}^{L}w_k^i\bar{\mathbf{A}}^i)^T\Theta_k+QX_k|\mathcal{G}_k\right],\\
  \Theta_N&=P_{N+1}X_{N+1},\\
  0&=R\hat{U}_k+E[(B+\sum_{i=1}^{L}w_k^i\bar{\mathbf{B}}^i)^T\Theta_k|\mathcal{F}^0_k],\\
  0 &=R\tilde{U}_{k}+E[(B+\sum_{i=1}^{L}w_k^i\bar{\mathbf{B}}^i)^T\Theta_{k}|\mathcal{G}_{k}]\\
    &-E[(B+\sum_{i=1}^{L}w_k^i\bar{\mathbf{B}}^i)^T\Theta_{k}|\mathcal{F}^0_{k}],\\
  0&=R^{ii}\tilde{u}_k^i+E[(B^i+w_k^i\bar{B}^i)^T\Theta_k^i|\mathcal{G}_k]\\
  &-E[(B^i+w_k^i\bar{B}^i)^T\Theta_k^i|\mathcal{G}_k^i].
\end{array} \right.
\end{equation}}

In the following lemma, we will introduce the preliminary results on the optimal estimation and the associated state estimation error.
\begin{lemma}\label{lemma2}
The optimal estimation $\hat{x}^i_k\triangleq E[x^i_k|\mathcal{F}^0_k], i=1,\cdots, L$ and $\hat{X}_k\triangleq E[X_k|\mathcal{F}^0_k]$  can be calculated by
\begin{align}
    \hat{x}^i_{k+1} &=
    (1-\gamma^i_{k+1})(A^i\hat{x}^i_k+B^i\hat{u}^i_k+B^{i0}u^0_k)\notag\\
    &+\gamma^i_{k+1}x^i_{k+1},\label{oe1}\\
\hat{X}_{k+1}&=(I_{\mathbb{N}_L}-\Gamma_{k+1})(A\hat{X}_k+B\hat{U}_k)+\Gamma_{k+1} X_{k+1},\label{oe2}
  \end{align}
 { with initial conditions $\hat{x}_{0}^i=\gamma^i_{0}\mu^i+(1-\gamma_{0}^i)x_{0}^i$, $\hat{X}_0=(I_{\mathbb{N}_L}-\Gamma_{0})\mu+\Gamma_{k+1} X_{0}$ and $\mu=vec(\mu^1,\cdots,\mu^L)$.}

  In this case, the error covariance $\tilde{x}_k^i=x_k^i-\hat{x}_k^i, i=1,\cdots,L$ and $\tilde{X}_k=X_k-\hat{X}_k$ satisfy
  \begin{align}
   \tilde{x}_{k+1}^i&=(1-\gamma^i_{k+1})\Big[ A^i\tilde{x}^i_{k}+B\tilde{u}^i_{k}\notag\\
   &+w_{k}^i(\bar{A}^ix_{k}^i+\bar{B}^i(\hat{u}^i_{k}+\tilde{u}^i_k))+v_k^i\Big],\label{tild1}\\
   \tilde{X}_{k+1}&=(I_{\mathbb{N}_L}-\Gamma_{k+1})\Big[ A\tilde{X}_{k}+B\tilde{U}_{k}\notag\\
   &+\sum_{i=1}^{L}w_k^i(\bar{\mathbf{A}}^iX_{k}
   +\bar{\mathbf{B}}^i(\hat{U}_{k}+\tilde{U}_k))+V_{k}\Big].\label{tild2}
  \end{align}

\end{lemma}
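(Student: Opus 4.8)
The plan is to establish the conditional-mean recursions \eqref{oe1}--\eqref{oe2} first, and then obtain the error recursions \eqref{tild1}--\eqref{tild2} by subtracting the estimator dynamics from the state dynamics, arguing by induction on $k$. The single-subsystem identity \eqref{oe1} and its stacked counterpart \eqref{oe2} are equivalent: since $\Gamma_{k+1}$ is block diagonal and, in the block structure of \eqref{nota1}, $B\hat U_k$ has $i$-th block $B^{i0}u^0_k+B^i\hat u^i_k$ while $A\hat X_k$ has $i$-th block $A^i\hat x^i_k$, stacking \eqref{oe1} over $i=1,\dots,L$ reproduces \eqref{oe2} verbatim. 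Hence it suffices to prove \eqref{oe1}.

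For the estimator recursion I would split the update at time $k+1$ into a prediction step and a measurement step. Note that $\mathcal F^0_{k+1}=\mathcal F^0_k\vee\sigma(\Gamma_{k+1},\Gamma_{k+1}X_{k+1})$, because $u^0_k$ is $\mathcal F^0_k$-measurable by Assumption \ref{ass1}. The prediction $E[x^i_{k+1}\mid\mathcal F^0_k]$ is computed by applying $E[\cdot\mid\mathcal F^0_k]$ to \eqref{sm-1}: the terms carrying $w^i_k$ and $v^i_k$ drop out because $w^i_k,v^i_k$ are zero mean and independent of $\sigma(\mathcal F^0_k, x^i_k, u^i_k)$ (they are the fresh noises entering at step $k$), while $u^0_k$ is $\mathcal F^0_k$-measurable and $E[u^i_k\mid\mathcal F^0_k]=\hat u^i_k$ by the definition in Lemma \ref{lem3}. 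This yields $E[x^i_{k+1}\mid\mathcal F^0_k]=A^i\hat x^i_k+B^i\hat u^i_k+B^{i0}u^0_k$. Since $\gamma^i_{k+1}$ is $\mathcal F^0_{k+1}$-measurable, I would then condition on its value: on $\{\gamma^i_{k+1}=1\}$ the block $x^i_{k+1}$ is contained in $\Gamma_{k+1}X_{k+1}$, so $\hat x^i_{k+1}=x^i_{k+1}$; on $\{\gamma^i_{k+1}=0\}$ the measurement carries no new information about $x^i_{k+1}$ and $\hat x^i_{k+1}=E[x^i_{k+1}\mid\mathcal F^0_k]$. Combining the two cases with the factors $\gamma^i_{k+1}$ and $1-\gamma^i_{k+1}$ gives \eqref{oe1}.

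The error recursions then follow by substitution. On $\{\gamma^i_{k+1}=1\}$, \eqref{oe1} gives $\tilde x^i_{k+1}=0$; on $\{\gamma^i_{k+1}=0\}$, subtracting the prediction from \eqref{sm-1} and writing $x^i_k=\hat x^i_k+\tilde x^i_k$, $u^i_k=\hat u^i_k+\tilde u^i_k$ collects the deterministic part into $A^i\tilde x^i_k+B^i\tilde u^i_k$ and leaves the noise part $w^i_k(\bar A^i x^i_k+\bar B^i u^i_k+\bar B^{i0}u^0_k)+v^i_k$; the common factor $1-\gamma^i_{k+1}$ then produces \eqref{tild1}, and the stacked form gives \eqref{tild2} upon recognizing that $\bar{\mathbf B}^iU_k$ reconstitutes $\bar B^{i0}u^0_k+\bar B^i u^i_k$ in the $i$-th block and that $\hat U_k+\tilde U_k=U_k$.

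The step I expect to be the main obstacle is justifying that, on $\{\gamma^i_{k+1}=0\}$, the extra data in $\mathcal F^0_{k+1}$ --- namely $\Gamma_{k+1}$ and the received blocks $\{\gamma^j_{k+1}x^j_{k+1}\}_{j\ne i}$ --- are uninformative about $x^i_{k+1}$, i.e. $E[x^i_{k+1}\mid\mathcal F^0_{k+1}]=E[x^i_{k+1}\mid\mathcal F^0_k]$. This is not automatic: although the driving noises are independent across subsystems, the controllers couple them, since $u^i_k$ is $\mathcal F^i_k$-measurable and $\mathcal F^i_k$ in \eqref{mwrt2} contains the shared observations $\{\Gamma_mX_m\}$. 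I would therefore prove, by induction on $k$, the auxiliary claim that given $\mathcal F^0_k$ the private data of the subsystems are conditionally independent across $i$. The induction uses two facts: the mutual independence of $x_0$, $\{\gamma^i\},\{w^i\},\{v^i\}$ across $i$; and the information structure \eqref{mwrt}--\eqref{mwrt2}, by which, conditionally on $\mathcal F^0_k$, $u^i_k$ is a function of the private history of subsystem $i$ alone. Given this conditional independence, observing the blocks $x^j_{k+1}$ for $j\ne i$ does not revise the estimate of $x^i_{k+1}$, which closes the measurement-update argument; conditioning on $\Gamma_{k+1}$ is harmless because it is independent of everything else.
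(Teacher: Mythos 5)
Your proposal is correct, but note that the paper does not actually prove Lemma \ref{lemma2}: it simply defers to \cite{ssfps2007,qz2017}, so there is no in-paper argument to compare against. Your route --- a prediction/update decomposition of $E[\cdot\mid\mathcal F^0_{k+1}]$ followed by subtraction to get the error dynamics --- is the standard one for this estimator, and the point you single out as the main obstacle is exactly the right one: for $L>1$ the claim that the received blocks $\{\gamma^j_{k+1}x^j_{k+1}\}_{j\neq i}$ do not revise the estimate of $x^i_{k+1}$ on $\{\gamma^i_{k+1}=0\}$ is not automatic, and the conditional independence of the subsystems' private histories given $\mathcal F^0_k$ (proved by induction using the independence of $x_0,\{\gamma^i\},\{w^i\},\{v^i\}$ across $i$ and the fact that, given the common information, $u^i_k$ depends only on subsystem $i$'s private data) is precisely the missing lemma; this is what the cited references establish in the single-channel setting and what must be re-verified here. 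Two small points worth flagging: your derivation of the error recursion produces the term $w^i_k\bigl(\bar A^i x^i_k+\bar B^i u^i_k+\bar B^{i0}u^0_k\bigr)$, whereas \eqref{tild1} as printed omits $\bar B^{i0}u^0_k$ (and writes $B\tilde u^i_k$ for $B^i\tilde u^i_k$); the stacked version \eqref{tild2} does retain it through $\bar{\mathbf B}^iU_k$, so your formula is the consistent one and the paper's \eqref{tild1} appears to carry a typo. Also, your one-line claim that $\mathcal F^0_{k+1}=\mathcal F^0_k\vee\sigma(\Gamma_{k+1},\Gamma_{k+1}X_{k+1})$ implicitly uses that $u^0_k$ is determined by $\mathcal F^0_k$, which holds under Assumption \ref{ass1} but deserves the explicit mention you give it.
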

\begin{proof}
  The detailed proof can be found in \cite{ssfps2007, qz2017}, which is omitted here.
\end{proof}

\begin{remark}
 It can be {observed from \eqref{tild1}-\eqref{tild2} that the controls are involved with the state estimation error, which} is the key difference from the additive noise case (i.e., $w_k^i=0$), see \cite{aon2018,am2019,lx2018,oan2016,nmt2013}. Moreover, since G-FBSDEs \eqref{gfbs} and G-FBSDEs \eqref{gfbs2} {are equivalent}, we will derive the optimal controls by solving \eqref{gfbs2} instead.
\end{remark}

\section{Optimal Controls by Decoupling G-FBSDEs}
In this section, the optimal controls $u_k^i, i=0,\cdots,L$ will be derived via decoupling the G-FBSDEs \eqref{gfbs} (equivalently G-FBSDEs \eqref{gfbs2}).

Firstly, the following CREs are introduced:
{\small\begin{equation}\label{re}
\left\{ \begin{array}{ll}
  P_k&=Q+A^TP_{k+1}A
  +\sum_{i=1}^{L}\Sigma_{w^i}(\bar{\mathbf{A}}^i)^TL_{k+1}
  \bar{\mathbf{A}}^i\\
  &-\Psi_k^T\Lambda_k^{-1}\Psi_k,\\
   H_k&=Q+A^TL_{k+1}A+\sum_{i=1}^{L}\Sigma_{w^i}(\bar{\mathbf{A}}^i)^TL_{k+1}
  \bar{\mathbf{A}}^i\\
  &-\tilde{\Psi}_k^T\tilde{\Lambda}_k^{-1}\tilde{\Psi}_k,\\
    P_k^{i}&=Q^{ii}+(A^i)^TP_{k+1}^{i}A^i
    +\Sigma_{w^i}(\bar{A}^i)^TL_{k+1}^{i}\bar{A}^i\\
    &-(\Omega_k^i)^T(\Pi_k^i)^{-1}\Omega_k^i,\\
   H_k^{i}&=Q^{ii}+(A^i)^TL_{k+1}^{i}A^i
    +\Sigma_{w^i}(\bar{A}^i)^TL_{k+1}^{i}\bar{A}^i\\
    &-(\tilde{\Omega}_k^i)^T(\tilde{\Pi}_k^i)^{-1}\tilde{\Omega}_k^i,
\end{array} \right.
\end{equation}}
with terminal conditions $L_{N+1}=H_{N+1}=P_{N+1}$, $L_{N+1}^{i}=H_{N+1}^{i}=P_{N+1}^{i}$ given in
\eqref{pi}, and the coefficients matrices $\Lambda_k, \Psi_k, \tilde{\Lambda}_k, \tilde{\Psi}_k, \Pi_k^i, \Omega_k^i, \tilde{\Pi}_k^i, \tilde{\Omega}_k^i, L_k, L_k^{i}$ in \eqref{re} {being} given by
{\small\begin{equation}\label{coma}
\left\{ \begin{array}{ll}
 \Lambda_k&=R+B^TP_{k+1}B+\sum_{i=1}^{L}\Sigma_{w^i}(\bar{\mathbf{B}}^i)^TL_{k+1}
  \bar{\mathbf{B}}^i,\\
    \Psi_k&=B^TP_{k+1}A+\sum_{i=1}^{L}\Sigma_{w^i}(\bar{\mathbf{B}}^i)^TL_{k+1}
  \bar{\mathbf{A}}^i,\\
    \tilde{\Lambda}_k&=R+B^TL_{k+1}B+\sum_{i=1}^{L}\Sigma_{w^i}(\bar{\mathbf{B}}^i)^TL_{k+1}
  \bar{\mathbf{B}}^i,\\
    \tilde{\Psi}_k&=B^TL_{k+1}A+\sum_{i=1}^{L}\Sigma_{w^i}(\bar{\mathbf{B}}^i)^TL_{k+1}
  \bar{\mathbf{A}}^i,\\
    \Pi^i_k&=R^{ii}+(B^i)^TP_{k+1}^{i}B^i+\Sigma_{w^i}(\bar{B}^i)^TL_{k+1}^{i}\bar{B}^i,\\
  \Omega_k^i&=(B^i)^TP_{k+1}^{i}A^i+\Sigma_{w^i}(\bar{B}^i)^TL_{k+1}^{i}\bar{A}^i,\\
    \tilde{\Pi}^i_k&=R^{ii}+(B^i)^TL_{k+1}^{i}B^i+\Sigma_{w^i}(\bar{B}^i)^TL_{k+1}^{i}\bar{B}^i,\\
  \tilde{\Omega}_k^i&=(B^i)^TL_{k+1}^{i}A^i+\Sigma_{w^i}(\bar{B}^i)^TL_{k+1}^{i}\bar{A}^i,\\
  L_k&=P_k p+H_k(I_{\mathbb{N}_L}-p), \\
  L_k^{i}&=p^iP_k^{i}+(1-p^i)H_k^{i}.
\end{array} \right.
\end{equation}}

\begin{remark}
For CREs \eqref{re}, the following points should be noted:
\begin{itemize}
  \item The CREs \eqref{re} are well defined (i.e., can be recursively calculated backwardly) if and only if $\Lambda_k, \tilde{\Lambda}_k$, $\Pi_k^i$ and $\tilde{\Pi}_k^i$ are invertible.
  \item Different from traditional symmetric Riccati equation \cite{lvs2015}, $P_k, H_k$ in \eqref{re} are {asymmetric}, while {$P_k^{i}$ and $H_k^{i}$} are symmetric.
\end{itemize}
\end{remark}

\begin{lemma}\label{lem6}
  Under Assumptions \ref{ass1} and \ref{ass2}, $\Pi_k^i, \tilde{\Pi}_k^i$ are positive definite for $k=0,\cdots,N$.
\end{lemma}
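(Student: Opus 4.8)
The plan is to prove the statement by a single backward induction on $k$ running from $k=N$ down to $k=0$, in which I establish \emph{simultaneously} that $\Pi_k^i,\tilde{\Pi}_k^i>0$ and that the Riccati iterates satisfy $P_k^i\ge0$, $H_k^i\ge0$ (hence $L_k^i\ge0$). These claims must be bundled into one induction because the positivity of $\Pi_k^i,\tilde{\Pi}_k^i$ rests on $P_{k+1}^i\ge0$ and $L_{k+1}^i\ge0$, whereas the semidefiniteness of $P_k^i,H_k^i$ in turn requires $\Pi_k^i,\tilde{\Pi}_k^i$ to be invertible; neither property propagates on its own.

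First I would extract the sign information carried by Assumption~\ref{ass2}. Since $R>0$, $Q\ge0$ and $P_{N+1}\ge0$, each diagonal block obeys $R^{ii}>0$, $Q^{ii}\ge0$ and $P_{N+1}^i\ge0$, because a diagonal block $E_i^TME_i$ of a (semi)definite matrix inherits its sign. In addition $\Sigma_{w^i}\ge0$ as a variance, and $p^i\in[0,1]$, so that $L_k^i=p^iP_k^i+(1-p^i)H_k^i$ is a convex combination and preserves positive semidefiniteness. With the terminal data $L_{N+1}^i=H_{N+1}^i=P_{N+1}^i\ge0$, the base case is immediate from \eqref{coma}: both $\Pi_N^i$ and $\tilde{\Pi}_N^i$ are the sum of $R^{ii}>0$ with two terms of the form $(B^i)^T(\cdot)B^i\ge0$ and $\Sigma_{w^i}(\bar{B}^i)^T(\cdot)\bar{B}^i\ge0$, hence positive definite.

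For the inductive step, suppose $P_{k+1}^i\ge0$ and $H_{k+1}^i\ge0$, so that $L_{k+1}^i\ge0$. Exactly as in the base case this gives $\Pi_k^i>0$ and $\tilde{\Pi}_k^i>0$, which is the asserted conclusion at step $k$. To keep the induction going I would then show $P_k^i\ge0$ and $H_k^i\ge0$ by a completion-of-squares (Schur complement) argument. Introduce the block matrix
\[
\mathcal{M}_k^i=\begin{bmatrix}A^i & B^i\end{bmatrix}^{T}P_{k+1}^i\begin{bmatrix}A^i & B^i\end{bmatrix}
+\Sigma_{w^i}\begin{bmatrix}\bar{A}^i & \bar{B}^i\end{bmatrix}^{T}L_{k+1}^i\begin{bmatrix}\bar{A}^i & \bar{B}^i\end{bmatrix}
+\begin{bmatrix}0 & 0\\ 0 & R^{ii}\end{bmatrix}.
\]
A direct comparison with \eqref{coma} shows that the $(2,2)$ block of $\mathcal{M}_k^i$ is $\Pi_k^i$, its $(1,2)$ block is $(\Omega_k^i)^{T}$, and its $(1,1)$ block is $(A^i)^{T}P_{k+1}^iA^i+\Sigma_{w^i}(\bar{A}^i)^{T}L_{k+1}^i\bar{A}^i$, so that $P_k^i-Q^{ii}$ is precisely the Schur complement of $\Pi_k^i$ in $\mathcal{M}_k^i$. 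Since $\mathcal{M}_k^i\ge0$ (a sum of congruences of the PSD matrices $P_{k+1}^i$, $L_{k+1}^i$ and of $R^{ii}>0$) and its $(2,2)$ block $\Pi_k^i$ is invertible, this Schur complement is positive semidefinite; adding $Q^{ii}\ge0$ yields $P_k^i\ge0$. Replacing $P_{k+1}^i$ by $L_{k+1}^i$ in the leading block and using $\tilde{\Pi}_k^i,\tilde{\Omega}_k^i$ gives $H_k^i\ge0$ by the identical argument, closing the induction.

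The only real obstacle I anticipate is the bookkeeping behind the Schur-complement identity, namely verifying that $\Omega_k^i$ (respectively $\tilde{\Omega}_k^i$) is exactly the off-diagonal block of $\mathcal{M}_k^i$ so that $P_k^i-Q^{ii}$ (respectively $H_k^i-Q^{ii}$) coincides with the relevant Schur complement; once the blocks in \eqref{coma} are written out this is routine. It is also worth noting that the positive definiteness proved here is precisely the invertibility required in the remark following \eqref{coma}, so the lemma simultaneously certifies that the backward recursions \eqref{re} for $P_k^i$ and $H_k^i$ are well defined throughout.
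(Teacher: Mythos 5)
Your proof is correct and follows essentially the same route as the paper: a backward induction that couples $\Pi_k^i,\tilde{\Pi}_k^i>0$ with $P_k^i,H_k^i\geq 0$ (hence $L_k^i\geq 0$), the paper expressing the key step as a completion of squares with the gains $g_k^i=-(\Pi_k^i)^{-1}\Omega_k^i$, $\tilde{g}_k^i=-(\tilde{\Pi}_k^i)^{-1}\tilde{\Omega}_k^i$ (its equations \eqref{pd1}--\eqref{pd2}), which is exactly your Schur-complement identity in disguise. If anything, your write-up makes the logical ordering of the induction (first $\Pi_k^i,\tilde{\Pi}_k^i>0$ from the inductive hypothesis, then $P_k^i,H_k^i\geq 0$) more explicit than the paper does.
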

\begin{proof}
  To facilitate the discussions, we denote:
  \begin{align}\label{gki}
    g_k^i&=-(\Pi_k^i)^{-1}\Omega_k^i,~~ \tilde{g}_k^i=-(\tilde{\Pi}_k^i)^{-1}\tilde{\Omega}_k^i.
  \end{align}

  With $k=N$, from \eqref{re}-\eqref{gki} we know that
  {\small\begin{equation}\label{euqil}
\left\{ \begin{array}{ll}
 (\Omega_N^i)^T(\Pi_N^i)^{-1}\Omega_N^i&=(g_N^i)^T\Pi_N^ig_N^i,\\
 &=-(g_N^i)^T\Omega_N^i=-(\Omega_N^i)^Tg_N^i,\\
 (\tilde{\Omega}_N^i)^T(\tilde{\Pi}_N^i)^{-1}\tilde{\Omega}_N^i
 &=(\tilde{g}_N^i)^T\tilde{\Pi}_N^i\tilde{g}_N^i,\\
 &=-(\tilde{g}_N^i)^T\tilde{\Omega}_N^i=-(\tilde{\Omega}_N^i)^T\tilde{g}_N^i.
\end{array} \right.
\end{equation}}

Then we can rewrite $P_N^{i}$ in CREs \eqref{re} as
\begin{align}\label{pd1}
  P_N^{i}&=Q^{ii}+(A^i)^TP_{N+1}^{i}A^i
    +\Sigma_{w^i}(\bar{A}^i)^TL_{N+1}^{i}\bar{A}^i\notag\\
    &-(\Omega_N^i)^T(\Pi_N^i)^{-1}\Omega_N^i\notag\\
    &=Q^{ii}+(A^i)^TP_{N+1}^{i}A^i
    +\Sigma_{w^i}(\bar{A}^i)^TL_{N+1}^{i}\bar{A}^i\notag\\
    &+({g}_N^i)^T{\Pi}_N^i{g}_N^i
    +({g}_N^i)^T{\Omega}_N^i+({\Omega}_N^i)^T{g}_N^i\notag\\
    &=Q^{ii}\hspace{-1mm}+\hspace{-1mm}({g}_N^i)^TR^{ii}{g}_N^i
    \hspace{-1mm}+\hspace{-1mm}(A^i+B^i{g}_N^i)^TP_{N+1}^{i}(A^i+B^i{g}_N^i)\notag\\
    &+\Sigma_{w^i}(\bar{A}^i+\bar{B}g_N^i)^TL_{N+1}^{i}(\bar{A}^i+\bar{B}g_N^i).
\end{align}
Similarly, it is not hard to obtain {that}
\begin{align}\label{pd2}
  H_N^{i} & =Q^{ii}\hspace{-1mm}+\hspace{-1mm}(\tilde{g}_N^i)^TR^{ii}\tilde{g}_N^i
    \hspace{-1mm}+\hspace{-1mm}
    (A^i+B^i\tilde{g}_N^i)^TL_{N+1}^{i}(A^i+B^i\tilde{g}_N^i)\notag\\
    &+\Sigma_{w^i}(\bar{A}^i+\bar{B}\tilde{g}_N^i)^T
    L_{N+1}^{i}(\bar{A}^i+\bar{B}\tilde{g}_N^i).
\end{align}

From Assumption \ref{ass2}, {we know that} $P_N^{i}$ and $H_N^{i}$ are both positive semidefinite, then it can be {observed} from \eqref{coma} that $\Pi_N^i>0$ and $\tilde{\Pi}_N^i>0$.

By repeating the above procedures backwardly, we can conclude that $\Pi_k^i$ and $\tilde{\Pi}_k^i$ are positive definite for $k=0,\cdots,N$. This ends the proof.
\end{proof}

{With} the preliminaries introduced in Lemmas \ref{lem1}-\ref{lem6}, {we are in a position to present the solution to Problem \ref{prob1}}.
\begin{theorem}\label{th2}
Under Assumptions \ref{ass1} and \ref{ass2}, Problem \ref{prob1} is uniquely solvable if and only if $\Lambda_k$ and $\tilde{\Lambda}_k$ given by \eqref{coma} are invertible. In this case, the optimal controls $u^i_k, k=0,\cdots,N, i=0,\cdots,L$ of minimizing cost function \eqref{pi} are given by
\begin{equation}\label{uk}
\left\{ \begin{array}{ll}
  u_k^0&=\mathcal{I}^0\hat{U}_k,\\
 u_k^i&=\mathcal{I}^i\hat{U}_k+\tilde{u}_k^i, i=1,\cdots,L,
\end{array} \right.
\end{equation}
where
  {\begin{align}
  \hat{U}_k& =-\Lambda^{-1}_k\Psi_k\hat{X}_k, \label{uk0}\\
   \tilde{u}_k^i&=-(\tilde{\Pi}^i_k)^{-1}\tilde{\Omega}^i_k\tilde{x}_k^i,~~i=1,\cdots,L,\label{uki}\\
\mathcal{I}^0&=[I_{m^0},0_{m^0\times m^1},\cdots,0_{m^0\times m^L}]_{m^0\times \mathbb{M}_{L}},\label{i0}\\
\mathcal{I}^i&=[0_{m^i\times m^0},0_{m^i\times m^1},\cdots,I_{m^i\times m^i},\cdots,0_{m^i\times m^L}]_{m^0\times \mathbb{M}_{L}},\notag
  \end{align}}
 {with $\hat{X}_k, \tilde{x}_k^i$ being calculated from Lemma \ref{lemma2}, and the coefficient matrices $\Lambda_k, \Psi_k, \tilde{\Pi}_k, \tilde{\Omega}_k$ being} calculated via \eqref{re}-\eqref{coma} backwardly.

 {Moreover, the optimal cost function is given by
 \begin{align}
 J_N^*=&\sum_{i=0}^{L}E[(x_0^i)^TP_0^{i}x_0^i]+\sum_{i=0}^{L}(1-p^i)Tr[\Sigma_{x_0^i}(P_0^{i}+H_0^{i})]\notag\\
  &+\sum_{i=0}^{L}\sum_{k=0}^{N}Tr(\Sigma_{v^i}L^{i}_{k+1}).
 \end{align}}

\end{theorem}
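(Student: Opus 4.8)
The plan is to invoke Theorem \ref{th1}, which reduces the unique solvability of Problem \ref{prob1} to that of the G-FBSDEs \eqref{gfbs2}, and then to decouple \eqref{gfbs2} explicitly by means of the coupled Riccati equations \eqref{re}. The central device is a backward-induction ansatz relating the costate $\Theta_{k-1}$ to the conditional state estimates produced by Lemma \ref{lemma2}. Guided by the three nested filtrations $\mathcal{F}_k^0\subset\mathcal{G}_k^i\subset\mathcal{G}_k$ and by the block structure of \eqref{re}, I would posit a multi-level representation of the form $\Theta_{k-1}=P_k\hat{X}_k+H_k\tilde{X}_k+(\text{subsystem corrections built from }P_k^i,H_k^i\text{ acting on }\tilde{x}_k^i)$, so that $E[\Theta_{k-1}|\mathcal{F}_k^0]=P_k\hat{X}_k$. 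At the terminal index $k=N+1$ this reduces to $\Theta_N=P_{N+1}X_{N+1}$, consistent with $L_{N+1}=H_{N+1}=P_{N+1}$ and $L_{N+1}^i=H_{N+1}^i=P_{N+1}^i$.

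Substituting this ansatz into the costate recursion \eqref{coss} and into the three equilibrium conditions of Lemma \ref{lem4}, namely \eqref{adeqs} for $\hat{U}_k$, \eqref{2adeqs} for $\tilde{U}_k$, and \eqref{adeqs2} for $\tilde{u}_k^i$, I would evaluate each conditional expectation using the estimator recursions \eqref{oe1}-\eqref{oe2}, the filtration identity \eqref{rela1} of Lemma \ref{lem3}, and the mutual independence of $\{w_k^i\}$, $\{v_k^i\}$, $\{\gamma_k^i\}$ and $x_0$. The multiplicative noise injects the terms $\Sigma_{w^i}(\bar{\mathbf{A}}^i)^TL_{k+1}\bar{\mathbf{A}}^i$ and $\Sigma_{w^i}(\bar{\mathbf{B}}^i)^TL_{k+1}\bar{\mathbf{B}}^i$, in which the blending matrix $L_{k+1}=P_{k+1}p+H_{k+1}(I_{\mathbb{N}_L}-p)$ appears precisely because the one-step-ahead state splits, according to $\Gamma_{k+1}$, into a received part of weight $p$ governed by $P$ and a dropped part of weight $I_{\mathbb{N}_L}-p$ governed by $H$.

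Collecting the coefficients of $\hat{X}_k$, $\tilde{X}_k$ and $\tilde{x}_k^i$, condition \eqref{adeqs} collapses to $\Lambda_k\hat{U}_k+\Psi_k\hat{X}_k=0$ and condition \eqref{adeqs2} to $\tilde{\Pi}_k^i\tilde{u}_k^i+\tilde{\Omega}_k^i\tilde{x}_k^i=0$, with the coefficient matrices exactly those in \eqref{coma}. By Lemma \ref{lem6}, $\tilde{\Pi}_k^i>0$, so \eqref{uki} is always uniquely determined; the remaining unknown $\hat{U}_k$ is uniquely solvable exactly when $\Lambda_k$ is invertible, giving \eqref{uk0}. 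Extracting the zeroth and $i$-th blocks with the selection matrices $\mathcal{I}^0,\mathcal{I}^i$ and using $\tilde{u}_k^0=0$ yields the control laws \eqref{uk}. Back-substituting these controls into \eqref{coss} and matching the coefficients of $\hat{X}_k,\tilde{X}_k,\tilde{x}_k^i$ closes the induction and shows that the coefficient matrices satisfy the CREs \eqref{re}; invertibility of $\tilde{\Lambda}_k$ is what makes $H_k$, hence $L_k$ and then $\Lambda_k,\Psi_k$, well defined at each step. Conversely, if $\Lambda_k$ or $\tilde{\Lambda}_k$ were singular the reduced equilibrium system would lack a unique solution, so by Theorem \ref{th1} Problem \ref{prob1} would not be uniquely solvable; hence invertibility of $\Lambda_k,\tilde{\Lambda}_k$ is both necessary and sufficient.

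The optimal cost $J_N^*$ would then follow from a completion-of-squares and telescoping argument: rewriting $X_k^TQX_k+U_k^TRU_k$ through the Riccati increment and the closed-loop transition, all cross terms cancel along the optimal trajectory and only boundary contributions remain, namely the quadratic initial-state term $\sum_iE[(x_0^i)^TP_0^ix_0^i]$, the dropout-weighted initial-covariance term $\sum_i(1-p^i)Tr[\Sigma_{x_0^i}(P_0^i+H_0^i)]$, and the additive-noise accumulation $\sum_i\sum_kTr(\Sigma_{v^i}L_{k+1}^i)$, which is exactly the stated $J_N^*$. The main obstacle will be the conditional-expectation evaluation and the coefficient matching: because the multiplicative noise makes the estimation error $\tilde{x}_k^i$ control-dependent, as noted in the Remark after Lemma \ref{lemma2}, one must verify that the common level governed by $P_k,H_k,\Lambda_k$ and the subsystem level governed by $P_k^i,H_k^i,\tilde{\Pi}_k^i$ genuinely decouple, so that the single ansatz is self-consistent and the separation principle is established. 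Keeping the bookkeeping of the three filtrations $\mathcal{F}_k^0,\mathcal{G}_k^i,\mathcal{G}_k$ and the $\Gamma$-dependent splitting correct throughout is where the argument is most delicate.
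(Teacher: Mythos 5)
Your plan is essentially the paper's own proof: reduce via Theorem \ref{th1} to the G-FBSDEs \eqref{gfbs2}, run a backward induction on the costate ansatz $\Theta_{k-1}=P_k\hat{X}_k+H_k\tilde{X}_k$ (the extra ``subsystem corrections'' you allow turn out to vanish---the two-term form is exact, with $P_k^i,H_k^i$ entering only through the $i$-th block of $\Theta_k$ projected onto $\mathcal{G}_k$ versus $\mathcal{G}_k^i$ in \eqref{adeqs2}), read off $\Lambda_k\hat{U}_k+\Psi_k\hat{X}_k=0$, $\tilde{\Lambda}_k\tilde{U}_k+\tilde{\Psi}_k\tilde{X}_k=0$ and $\tilde{\Pi}_k^i\tilde{u}_k^i+\tilde{\Omega}_k^i\tilde{x}_k^i=0$ from the three equilibrium conditions of Lemma \ref{lem4}, and obtain the cost by telescoping $E[X_k^T\Theta_{k-1}]$. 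The approach and all key ingredients (Lemmas \ref{lem3}--\ref{lem6}, the $p$-weighted blending $L_{k+1}$, invertibility of $\Lambda_k,\tilde{\Lambda}_k$ as the necessary and sufficient condition) match the paper's argument.
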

\begin{proof}
  See Appendix.
\end{proof}

\begin{remark}
In Theorem \ref{th2}, we first derive the optimal control strategies by decoupling the G-FBSDEs \eqref{gfbs} (equivalently G-FBSDEs \eqref{gfbs2}). {The optimal control strategies are given in terms of new CREs \eqref{re}, which can be calculated backwardly under Assumptions \ref{ass1}-\ref{ass2} and the conditions {that} $\Lambda_k, \tilde{\Lambda}_k$ are invertible.} Moreover, it is noted that $P_k$ and $H_k$ in \eqref{re} are asymmetric, {which is} the essential difference from the additive noise case, see \cite{aon2018,am2019,lx2018,oan2016,nmt2013}.
\end{remark}

\section{Discussions}
In this section, {we shall discuss some special cases of Problem \ref{prob1} and demonstrate the novelty and significance of our results.}

\subsection{Additive Noise Case}
In the case of $w_k^i=0$, the MN-NCS \eqref{sm-1} turns into the additive noise case. Using the results in Theorem \ref{th2}, the solution to Problem \ref{prob1} can be presented as follows.
\begin{corollary}\label{lem7}
{Under Assumptions \ref{ass1} and \ref{ass2}}, Problem \ref{prob1} is uniquely solvable. Moreover, the control strategies $u^i_k, k=0,\cdots,N, i=0,\cdots,L$ {that minimize \eqref{pi} can be given by}
\begin{equation}\label{uk2}
\left\{ \begin{array}{ll}
  u_k^0&=\mathcal{I}^0\hat{U}_k,\\
 u_k^i&=\mathcal{I}^i\hat{U}_k+\tilde{u}_k^i, i=1,\cdots,L,
\end{array} \right.
\end{equation}
where
  \begin{align}
  \hat{U}_k& =-\Lambda^{-1}_k\Psi_k\hat{X}_k, \label{uk02}\\
   \tilde{u}_k^i&=-(\tilde{\Pi}^i_k)^{-1}\tilde{\Omega}^i_k\tilde{x}_k^i,~~i=1,\cdots,L.\label{uki2}
  \end{align}
 {In the above,} $\hat{X}_k, \tilde{x}_k^i$ are given in Lemma \ref{lemma2} with $w_k^i=0$, and the coefficient matrices $\Lambda_k, \Psi_k, \tilde{\Pi}_k, \tilde{\Omega}_k$ can be calculated via the following Riccati equations:
   {\small\begin{equation}\label{re2}
\left\{ \begin{array}{ll}
  P_k&=Q+A^TP_{k+1}A-\Psi_k^T\Lambda_k^{-1}\Psi_k,\\
    P_k^{i}&=Q^{ii}+(A^i)^TP_{k+1}^{i}A^i
    -(\Omega_k^i)^T(\Pi_k^i)^{-1}\Omega_k^i,\\
   H_k^{i}&=Q^{ii}+(A^i)^TL_{k+1}^{i}A^i
   -(\tilde{\Omega}_k^i)^T(\tilde{\Pi}_k^i)^{-1}\tilde{\Omega}_k^i,\\
 L_{N+1}^{i}&=P_{N+1}^{i}, P_{N+1} ~~\text{given in \eqref{pi}},
\end{array} \right.
\end{equation}}
where $\Lambda_k, \Psi_k, \Pi_k^i, \Omega_k^i, \tilde{\Pi}_k^i, \tilde{\Omega}_k^i, L_k, L_k^{i}$ in \eqref{re} satisfy
{\small\begin{equation}\label{coma2}
\left\{ \begin{array}{ll}
 \Lambda_k&=R+B^TP_{k+1}B,\\
    \Psi_k&=B^TP_{k+1}A,\\
    \Pi^i_k&=R^{ii}+(B^i)^TP_{k+1}^{i}B^i,\\
  \Omega_k^i&=(B^i)^TP_{k+1}^{i}A^i,\\
    \tilde{\Pi}^i_k&=R^{ii}+(B^i)^TL_{k+1}^{i}B^i,\\
  \tilde{\Omega}_k^i&=(B^i)^TL_{k+1}^{i}A^i,\\
  L_k^{i}&=p^iP_k^{i}+(1-p^i)H_k^{i}.
\end{array} \right.
\end{equation}}
\end{corollary}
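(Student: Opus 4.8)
The plan is to obtain Corollary \ref{lem7} as the degenerate case $w_k^i=0$ of Theorem \ref{th2}, and then to sharpen the conditional solvability of Theorem \ref{th2} into an unconditional one. First I would substitute $\Sigma_{w^i}=0$ throughout \eqref{re}--\eqref{coma}. Every term carrying a factor $\Sigma_{w^i}$ vanishes, so the coefficient matrices \eqref{coma} collapse to \eqref{coma2} and the four coupled recursions \eqref{re} collapse to \eqref{re2}; in particular the asymmetric $P_k$ of the general case reduces to a standard symmetric Riccati recursion depending only on $P_{k+1}$, while the global ``$H_k$''/$\tilde\Lambda_k$ chain is no longer required because, with $w_k^i=0$, the error dynamics \eqref{tild2} become block diagonal and the local correction $\tilde U_k=U_k-\hat U_k=vec(0,\tilde u_k^1,\dots,\tilde u_k^L)$ decouples across subsystems into the per-subsystem quantities $\tilde\Pi_k^i,\tilde\Omega_k^i$. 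The estimator and error filters of Lemma \ref{lemma2} simplify in the same way by deleting the $w_k^i$ terms in \eqref{tild1}--\eqref{tild2}.

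With the substitution in place, the control laws \eqref{uk2}--\eqref{uki2} are read off verbatim from \eqref{uk}--\eqref{uki}, so the only substantive step is to verify that the invertibility hypothesis of Theorem \ref{th2} holds automatically when $w_k^i=0$. To this end I would rerun the completion-of-squares identity from the proof of Lemma \ref{lem6}, cf. \eqref{pd1}--\eqref{pd2}, now with $\Sigma_{w^i}=0$. Writing $g_k^i=-(\Pi_k^i)^{-1}\Omega_k^i$ and $\tilde g_k^i=-(\tilde\Pi_k^i)^{-1}\tilde\Omega_k^i$ as in \eqref{gki}, this expresses $P_k^i$ and $H_k^i$ as sums of the manifestly positive-semidefinite pieces $Q^{ii}$, $(g_k^i)^TR^{ii}g_k^i$ (respectively $(\tilde g_k^i)^TR^{ii}\tilde g_k^i$) and a congruence of $P_{k+1}^i$ (respectively $L_{k+1}^i$). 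Starting from $P_{N+1}^i=L_{N+1}^i\ge 0$ and $P_{N+1}\ge 0$, a downward induction in $k$ then gives $P_k^i\ge 0$ and $H_k^i\ge 0$, hence $L_k^i=p^iP_k^i+(1-p^i)H_k^i\ge 0$ as a convex combination since $p^i\in[0,1]$, and by the analogous symmetric Riccati argument $P_k\ge 0$.

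Positive semidefiniteness is precisely what drops the hypothesis: by Assumption \ref{ass2} we have $R>0$, so $\Lambda_k=R+B^TP_{k+1}B\ge R>0$ and $\tilde\Pi_k^i=R^{ii}+(B^i)^TL_{k+1}^iB^i\ge R^{ii}>0$ (the latter already contained in Lemma \ref{lem6}) are invertible for every $k=0,\dots,N$. Hence all matrices inverted in \eqref{uk02}--\eqref{uki2} exist, the ``if and only if'' of Theorem \ref{th2} degenerates to ``always,'' and Problem \ref{prob1} is uniquely solvable. I expect the main obstacle to be the bookkeeping in this backward induction: one must check that the completion-of-squares rewriting still closes after the packet-dropout averaging $L_k^i=p^iP_k^i+(1-p^i)H_k^i$ couples the ``$P$'' and ``$H$'' chains, so that positive semidefiniteness propagates through both recursions simultaneously, together with a careful justification that the global correction equation \eqref{2adeqs} genuinely decouples into the per-subsystem relations \eqref{adeqs2} when $w_k^i=0$.
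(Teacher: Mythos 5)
Your proposal is correct and follows essentially the same route as the paper: specialize Theorem~\ref{th2} by setting $\Sigma_{w^i}=0$ so that \eqref{re}--\eqref{coma} collapse to \eqref{re2}--\eqref{coma2}, and then upgrade the conditional solvability to unconditional solvability by rerunning the completion-of-squares argument of Lemma~\ref{lem6} to get $P_k\ge 0$, $P_k^i\ge 0$, $H_k^i\ge 0$ and hence $\Lambda_k\ge R>0$, $\tilde\Pi_k^i\ge R^{ii}>0$. The paper's own justification is only a remark to this effect (together with the observation that $L_k=P_k=H_k$ in the additive-noise case, which is the clean way to see why the global $H_k$/$\tilde\Lambda_k$ chain disappears), so your write-up is, if anything, more explicit.
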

\begin{remark}

  As shown in Corollary \ref{lem7}, the following points should be noted:
  \begin{itemize}
    \item The obtained results in {Theorem \ref{th2}} can be reduced to the additive noise case (i.e., $w_k^i=0$), which include the results of \cite{aon2018,am2019,lx2018} as a special case.
    \item For the additive noise case, it is found that $L_k=P_k=H_k$, hence CREs \eqref{re} can be reduced to \eqref{re2}, which are symmetric.
    \item For Riccati equations \eqref{re2}, by following the techniques of Lemma \ref{lem6}, it can be shown that $\Lambda_k, \Pi_k^i$ are positive definite under Assumptions \ref{ass1} and \ref{ass2}, thus the solvability of Problem \ref{prob1} can be ensured. Furthermore, the control strategies \eqref{uk2} are unique in this case.
  \end{itemize}
\end{remark}

\subsection{Single Subsystem Case}
In this section, we will consider the single subsystem case, i.e., $L=1$.

Following the results in Theorem \ref{th2}, {the solution to Problem \ref{prob1} for the single subsystem case can be given as follows.}
\begin{corollary}\label{lem8}
Under Assumptions \ref{ass1} and \ref{ass2}, Problem \ref{prob1} is uniquely solvable, {and the optimal controls $u^i_k, k=0,\cdots,N, i=0, 1$ can be given as}
\begin{equation}\label{uk3}
\left\{ \begin{array}{ll}
  u_k^0&=\mathcal{I}^0\hat{U}_k,\\
 u_k^1&=\mathcal{I}^1\hat{U}_k+\tilde{u}_k^1,
\end{array} \right.
\end{equation}
where
  \begin{align}
  \hat{U}_k& =-\Lambda^{-1}_k\Psi_k\hat{X}_k, \label{uk03}\\
   \tilde{u}_k^1&=-(\tilde{\Lambda}_k)^{-1}
   \tilde{\Psi}_k\tilde{x}_k,\label{uki3}
  \end{align}
   {with $\Lambda_k, \Psi_k, \tilde{\Lambda}_k, \tilde{\Psi}_k$ given by}
   {\small\begin{equation}\label{coma3}
\left\{ \begin{array}{ll}
 \Lambda_k&=R+B^TP_{k+1}B+\Sigma_{w^1}\bar{B}^TL_{k+1}\bar{B},\\
    \Psi_k&=B^TP_{k+1}A+\Sigma_{w^1}\bar{B}^TL_{k+1}\bar{A},\\
    \tilde{\Lambda}_k&=R+B^TL_{k+1}B+\Sigma_{w^1}\bar{B}^TL_{k+1}\bar{B},\\
    \tilde{\Psi}_k&=B^TL_{k+1}A+\Sigma_{w^1}\bar{B}^TL_{k+1}\bar{A},\\
  L_k&=p P_k +(1-p)H_k,
\end{array} \right.
\end{equation}}
   {and $P_k, H_k$ satisfying}
   {\small\begin{equation}\label{re3}
\left\{ \begin{array}{ll}
  P_k&\hspace{-1mm}=\hspace{-1mm}Q+A^TP_{k+1}A+\Sigma_{w^1}\bar{A}^TL_{k+1}\bar{A}-\Psi_k^T\Lambda_k^{-1}\Psi_k,\\
   H_k&\hspace{-1mm}=\hspace{-1mm}Q+A^TL_{k+1}A+\Sigma_{w^1}\bar{A}^TL_{k+1}\bar{A}-\tilde{\Psi}_k^T\tilde{\Lambda}_k^{-1}\tilde{\Psi}_k,
\end{array} \right.
\end{equation}}
with terminal conditions $L_{N+1}=H_{N+1}=P_{N+1}$, and $\Sigma_{w^1}$ is the covariance of system noise $\{w_k^1\}_{k=0}^{N}$ with $L=1$ in \eqref{sm-1} and \eqref{pi}.
\end{corollary}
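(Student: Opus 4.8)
The plan is to deduce Corollary \ref{lem8} as the single-subsystem instance of Theorem \ref{th2}, so that only a bookkeeping reduction of the block data and the coupled Riccati recursions is required, rather than a fresh variational analysis. First I would set $L=1$ and record the degeneracies. The aggregate state coincides with the lone subsystem state, so $X_k=x_k^1$, $\hat{X}_k=\hat{x}_k$ and $\tilde{X}_k=\tilde{x}_k$; the block-diagonal and block-row arrays in \eqref{nota1} collapse to single blocks, giving $A=A^1$ and $\bar{\mathbf{A}}^1=\bar{A}$; and every sum $\sum_{i=1}^L\Sigma_{w^i}(\cdot)$ contracts to the single $\Sigma_{w^1}$ term. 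Substituting these into the coefficient group \eqref{coma} reproduces exactly \eqref{coma3}, and substituting into the top pair of the recursions \eqref{re}, those for $P_k$ and $H_k$, reproduces \eqref{re3}; since there is now a single state, the full pair $(P_k,H_k)$ already carries all the information, which is why \eqref{re3} retains no separate per-subsystem recursions.

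Next I would separate the control into its common and private parts as in \eqref{uk}. The common-information part \eqref{uk0}, $\hat{U}_k=-\Lambda_k^{-1}\Psi_k\hat{X}_k$, transfers verbatim and, through the selectors \eqref{i0}, yields $u_k^0$ and $\hat{u}_k^1$, which are precisely the first line of \eqref{uk3} and the formula \eqref{uk03}. For the private part I would use that $u_k^0$ is $\mathcal{F}_k^0$-measurable, whence $\tilde{u}_k^0=0$ and the only nontrivial correction is $\tilde{u}_k^1$. Because there is a single subsystem, the private filtration $\mathcal{G}_k^1$ of \eqref{hki} loses its cross-subsystem terms, so the per-subsystem equilibrium relation \eqref{adeqs2} is subsumed by the aggregate relation \eqref{2adeqs}; solving that relation together with the error recursion \eqref{tild2} then produces the feedback \eqref{uki3}, $\tilde{u}_k^1=-\tilde{\Lambda}_k^{-1}\tilde{\Psi}_k\tilde{x}_k$.

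Finally, I would establish solvability and uniqueness by replaying the argument of Lemma \ref{lem6} on the reduced recursions. The completion-of-squares identities \eqref{euqil}--\eqref{pd2}, applied to \eqref{re3}, show by backward induction that $P_k$ and $H_k$ are positive semidefinite and that $\Lambda_k$ and $\tilde{\Lambda}_k$ are positive definite under Assumptions \ref{ass1}--\ref{ass2}; hence $\Lambda_k,\tilde{\Lambda}_k$ are invertible, which by Theorem \ref{th2} is exactly the condition for unique solvability, and the feedback gains in \eqref{uk03}--\eqref{uki3} are then well defined.

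The step I expect to require the most care is the private correction. The general result states the per-subsystem correction \eqref{uki} through the local matrices $\tilde{\Pi}_k^i,\tilde{\Omega}_k^i$ (driven by the subsystem weight $L_{k+1}^i$), whereas Corollary \ref{lem8} writes it through the full matrices $\tilde{\Lambda}_k,\tilde{\Psi}_k$ (driven by $L_{k+1}$). Reconciling the two forms means checking that, once $L=1$ is imposed and the constraint $\tilde{u}_k^0=0$ is used, the conditional-expectation difference in \eqref{2adeqs} isolates exactly the local-error channel appearing in \eqref{tild1}--\eqref{tild2}, so that the aggregate feedback and the per-subsystem feedback coincide. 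This is a matter of careful identification of the relevant matrix blocks under $L_{k+1}=L_{k+1}^1$ rather than any new estimate, and it is the only place where the single-subsystem structure must be invoked beyond mechanical substitution.
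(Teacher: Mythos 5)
Your proposal is correct and follows essentially the same route as the paper, which presents Corollary 2 as a direct specialization of Theorem \ref{th2} to $L=1$ without a separate proof: collapse the block data so that \eqref{coma} and \eqref{re} reduce to \eqref{coma3} and \eqref{re3}, note that $\tilde{u}_k^0=0$ so the aggregate correction $\tilde{U}_k=-\tilde{\Lambda}_k^{-1}\tilde{\Psi}_k\tilde{X}_k$ from the appendix identity \eqref{tildeu} carries only the local component, and observe that for $L=1$ the matrix $p=p^1 I_{n_1}$ commutes with everything so $P_k,H_k,L_k$ stay symmetric and the Lemma \ref{lem6} completion-of-squares argument upgrades to give $\Lambda_k,\tilde{\Lambda}_k>0$, hence unconditional unique solvability. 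Your flagged reconciliation of the $(\tilde{\Pi}_k^1,\tilde{\Omega}_k^1)$ and $(\tilde{\Lambda}_k,\tilde{\Psi}_k)$ forms of the private feedback is exactly the right point of care and is resolved as you describe.
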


As for the results given in Corollary \ref{lem8}, we have the following comments.
\begin{remark}
  Firstly, different from {the} multiple subsystems case in Theorem \ref{th2}, the optimal controls \eqref{uk3} are based on symmetric Riccati equations \eqref{re3}. Secondly, Assumptions \ref{ass1} and \ref{ass2} are sufficient to guarantee the solvability of Problem \ref{prob1}.
\end{remark}

\subsection{Solvability with Indefinite Weighting Matrices}
In this section, we will investigate the case of {indefinite weighting matrices in \eqref{pi}}. In other words, we will just assume that weighting matrices $Q, R, P_{N+1}$ in \eqref{pi} are {symmetric}.

Firstly, we will introduce the generalized Riccati equation:
{\small\begin{equation}\label{up1k}
\left\{ \begin{array}{ll}
   \Delta_k&=Q+A^T\Delta_{k+1}A+\sum_{i=1}^{L}\Sigma_{w^i}(\bar{\mathbf{A}}^i)^T\Delta_{k+1}
  \bar{\mathbf{A}}^i\\
  &-M_k^T\Upsilon_k^{\dag}M_k,\\
    \Upsilon_k&=R+B^T\Delta_{k+1}B+\sum_{i=1}^{L}\Sigma_{w^i}(\bar{\mathbf{B}}^i)^T\Delta_{k+1}
  \bar{\mathbf{B}}^i,\\
    M_k&=B^T\Delta_{k+1}A+\sum_{i=1}^{L}\Sigma_{w^i}(\bar{\mathbf{B}}^i)^T\Delta_{k+1}
  \bar{\mathbf{A}}^i,
\end{array} \right.
\end{equation}}
{with terminal condition $\Gamma_{N+1}=P_{N+1}$, and $\dag$ denotes the Moore-Penrose inverse}.

We will present the following corollary without proof.
\begin{corollary}\label{lem9}
  Under Assumption \ref{ass1}, Problem \ref{prob1} is uniquely {solvable} if and only if ${\Upsilon}_k\geq 0$ in \eqref{up1k}, and $\Lambda_k, \tilde{\Lambda}_k, \Pi_k^i, \tilde{\Pi}_k^i$ in \eqref{re}-\eqref{coma} are all invertible for $k=0,\cdots, N, L=1,\cdots,L$. In this case, the optimal controls are given by \eqref{uk0}, in which the coefficients are based on the solution to the CREs \eqref{re}.
\end{corollary}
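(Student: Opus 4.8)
The plan is to reduce Problem~\ref{prob1} to the sign analysis of its second variation, which is available without any definiteness hypothesis. Lemma~\ref{lem1} is proved using only the dynamics and is independent of Assumption~\ref{ass2}; it gives, along any feasible perturbation $\delta U$, the exact increment \eqref{diff}, i.e. $\varepsilon^2\delta J_N$ plus a first-order term linear in the equilibrium residual of \eqref{coseq}. The first-order argument in the proof of Theorem~\ref{th1} then still forces every optimal control to annihilate that residual, so an optimal control must solve \eqref{coseq}. Repeating the uniqueness bookkeeping without Assumption~\ref{ass2}, one finds that Problem~\ref{prob1} is uniquely solvable if and only if (a) the equilibrium condition \eqref{coseq} admits a solution and (b) $\delta J_N>0$ for every nonzero feasible $\delta U$; indeed (b) makes any critical point the unique minimizer, and conversely a null or negative feasible direction of $\delta J_N$ would produce a second minimizer or unboundedness. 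The whole task is therefore to translate (a) and (b) into the stated Riccati conditions.

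I would first handle the four invertibilities. Decoupling \eqref{coseq} as in Lemma~\ref{lem4} into the common-information channel \eqref{adeqs}--\eqref{2adeqs} and the local-innovation channels \eqref{adeqs2}, and substituting the estimator of Lemma~\ref{lemma2}, the equilibrium equations become linear relations with coefficient matrices $\Lambda_k$ (for $\hat U_k$) and $\tilde\Pi_k^i$ (for $\tilde u_k^i$); the remaining matrices $\tilde\Lambda_k,\Pi_k^i$ enter only through $H_k,P_k^i$ and hence through $L_k,L_k^i$, so all four invertibilities are exactly what makes the CREs \eqref{re}--\eqref{coma} recursively computable (the well-definedness noted after \eqref{coma}) and each channel of \eqref{adeqs}--\eqref{adeqs2} uniquely solvable. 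Thus, given the invertibilities, the candidate controls \eqref{uk}--\eqref{uki} are produced exactly as in Theorem~\ref{th2}, and (a) holds with a unique solution; singularity of any of the four would break the recursion or leave a channel inconsistent or under-determined, contradicting uniqueness.

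It remains to match (b) with $\Upsilon_k\ge0$. For sufficiency I would complete the square in $\delta J_N$ against the centralized generalized Riccati matrix $\Delta_k$ of \eqref{up1k}: since the perturbed trajectory satisfies $Y_0=0$ and carries no additive noise, the boundary and drift terms telescope and $\delta J_N$ reduces to $\sum_kE[(\delta U_k-\delta U_k^{\sharp})^T\Upsilon_k(\delta U_k-\delta U_k^{\sharp})]$ with $\delta U_k^{\sharp}=-\Upsilon_k^{\dag}M_kY_k$, which is nonnegative precisely when $\Upsilon_k\ge0$ (the Moore--Penrose range identity $M_k=\Upsilon_k\Upsilon_k^{\dag}M_k$ being what legitimizes this square). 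For necessity I would use that on the positive-probability event that every uplink succeeds through time $k$ the filtration $\mathcal{F}_k^0$ carries the full state, so feasible controls there act as centralized full-state feedback; a direction $\xi$ with $\xi^T\Upsilon_k\xi<0$ then yields, via a perturbation supported on that event, $\delta J_N<0$, contradicting (b). Together with the preceding paragraph this gives the claimed equivalence, and the optimal controls are \eqref{uk}.

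The main obstacle is pinning the convexity down to the \emph{centralized} matrix $\Upsilon_k$ rather than the decentralized $\Lambda_k,\tilde\Pi_k^i$ that actually govern the feasible directions: since $\Upsilon_k$ is assembled from the optimally controlled continuation $\Delta_{k+1}$, a single-instant perturbation only exposes the uncontrolled Lyapunov continuation, and to realize $\Delta_{k+1}$ one must steer the tail optimally, which is feasible only while all uplinks keep succeeding, an event that shrinks with time. Controlling the resulting leakage so that the centralized negative curvature still dominates after rescaling, and establishing the comparison between $\Delta_k$ and the asymmetric pair $(P_k,H_k)$ that converts $\Upsilon_k\ge0$ together with the invertibilities into $\Lambda_k,\tilde\Pi_k^i>0$, is where the real work lies; the cleanest route is a joint backward induction propagating the decentralized second-variation identity and the centralized certificate $\Delta_k$ simultaneously, together with a verification of the range condition $M_k=\Upsilon_k\Upsilon_k^{\dag}M_k$.
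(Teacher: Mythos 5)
Your overall route is the one the paper intends: the paper omits the proof of Corollary~\ref{lem9} and justifies it in the accompanying remark by exactly the two ingredients you use, namely (i) the first-variation identity of Lemma~\ref{lem1}/Theorem~\ref{th1}, which forces the equilibrium condition \eqref{coseq} and reduces unique solvability to unique solvability of the decoupled equilibrium equations --- handled by the invertibility of $\Lambda_k,\tilde{\Lambda}_k,\Pi_k^i,\tilde{\Pi}_k^i$ through the induction of Theorem~\ref{th2}, which never uses the signs of $Q,R,P_{N+1}$ once those inverses exist --- and (ii) the asserted equivalence between $\Upsilon_k\geq 0$ in \eqref{up1k} and the condition $\delta J_N\geq 0$ in \eqref{deljn}. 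On ingredient (i) your argument matches the paper and is fine.

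The gap is in ingredient (ii), and you have located it precisely but not closed it. For sufficiency, the completion of squares of $\delta J_N$ against $\Delta_k$ yields $\sum_k E[(\delta U_k+\Upsilon_k^{\dag}M_kY_k)^T\Upsilon_k(\delta U_k+\Upsilon_k^{\dag}M_kY_k)]$ only together with the range condition $\Upsilon_k\Upsilon_k^{\dag}M_k=M_k$; if that fails, a $\delta U_k$ chosen in $\ker\Upsilon_k$ with $Y_k^TM_k^T\delta U_k\neq 0$ drives $\delta J_N\to-\infty$ even though $\Upsilon_k\geq 0$. This range condition appears neither in the corollary nor in \eqref{up1k}, and you list it as something ``to be verified'' when it is in fact an additional hypothesis, not a consequence. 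For necessity --- that nonnegativity of $\delta J_N$ over the \emph{decentralized} perturbation class implies the \emph{centralized} condition $\Upsilon_k\geq 0$ --- your perturbation supported on the all-uplinks-succeed event only exposes $\xi^T\Upsilon_k\xi$ if the optimal centralized continuation encoded in $\Delta_{k+1}$ can be implemented after time $k$, which requires continued uplink success; the resulting error is exactly the ``leakage'' you describe, no bound is offered showing the negative curvature survives it, and when some $p^i=0$ the conditioning event is null and the argument collapses outright. As written, the proposal therefore establishes neither direction of the equivalence that the paper itself names as ``the key of deriving Corollary~\ref{lem9}''; in fairness, the paper asserts that equivalence without proof as well, so the step you leave open is genuinely the open step.
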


\begin{remark}
 The necessary and sufficient solvability conditions of Problem \ref{prob1} are shown in Corollary \ref{lem9} under the assumption that {the weighting matrices of \eqref{pi} are} indefinite. The proposed results in Corollary \ref{lem9} can be induced from Theorem \ref{th1} and its proof. {It can be observed that the positive semi-definiteness of $\Upsilon_k$ is equivalent to the condition $\delta J_N\geq 0$ in \eqref{deljn}, which is the key of deriving Corollary \ref{lem9}}. {To avoid repetition}, the detailed proof of Corollary \ref{lem9} is omitted here.
\end{remark}

\section{Numerical Examples}
In this section, some numerical simulation examples will be provided to show the effectiveness and feasibilities of the main results.

\subsection{State Trajectory with Optimal Controls}
Consider MN-NCSs \eqref{sm-1} and cost function \eqref{pi} with
{\small\begin{align}\label{coe}
  L&=3, p^1=0.8, p^2=0.6, p^3=0.8,N=60,\notag\\
  A^1&=\left[\hspace{-2mm}
  \begin{array}{cccc}
  2&-2\\
   1&3
  \end{array}
\hspace{-2mm}\right], A^2=\left[\hspace{-2mm}
  \begin{array}{cccc}
  0.8&2\\
   -1&0.6
  \end{array}
\hspace{-2mm}\right], A^3=\left[\hspace{-2mm}
  \begin{array}{cccc}
  1&-0.3\\
   3&-2
  \end{array}
\hspace{-2mm}\right],\notag\\
B^1&=\left[\hspace{-2mm}
  \begin{array}{cccc}
  1.1&-1.9\\
   0.6&2
  \end{array}
\hspace{-2mm}\right],B^2=\left[\hspace{-2mm}
  \begin{array}{cccc}
  1&2\\
   3&4
  \end{array}
\hspace{-2mm}\right], B^3=\left[\hspace{-2mm}
  \begin{array}{cccc}
  -1&1.5\\
   1&2
  \end{array}
\hspace{-2mm}\right],\notag\\
B^{10}&=\left[\hspace{-2mm}
  \begin{array}{cccc}
  -1&1.5\\
   2&-2
  \end{array}
\hspace{-2mm}\right],B^{20}=\left[\hspace{-2mm}
  \begin{array}{cccc}
  0.8&-2\\
   1&-3
  \end{array}
\hspace{-2mm}\right], B^{30}=\left[\hspace{-2mm}
  \begin{array}{cccc}
  1&-3\\
   0&2
  \end{array}
\hspace{-2mm}\right],\notag\\
\bar{A}^1&=\left[\hspace{-2mm}
  \begin{array}{cccc}
  0.6&2\\
   3&0.9
  \end{array}
\hspace{-2mm}\right], \bar{A}^2=\left[\hspace{-2mm}
  \begin{array}{cccc}
  1&-0.5\\
   0.5&2
  \end{array}
\hspace{-2mm}\right], \bar{A}^3=\left[\hspace{-2mm}
  \begin{array}{cccc}
  0&-1\\
   2&2.3
  \end{array}
\hspace{-2mm}\right],\notag\\
\bar{B}^1&=\left[\hspace{-2mm}
  \begin{array}{cccc}
  -1&0.2\\
   3&2
  \end{array}
\hspace{-2mm}\right],\bar{B}^2=\left[\hspace{-2mm}
  \begin{array}{cccc}
  1.2&1.8\\
   -3&2
  \end{array}
\hspace{-2mm}\right], \bar{B}^3=\left[\hspace{-2mm}
  \begin{array}{cccc}
  1.1&-2\\
   3.1&2
  \end{array}
\hspace{-2mm}\right],\notag\\
\bar{B}^{10}&=\left[\hspace{-2mm}
  \begin{array}{cccc}
  -1&0.9\\
   0.7&2
  \end{array}
\hspace{-2mm}\right],\bar{B}^{20}=\left[\hspace{-2mm}
  \begin{array}{cccc}
  1.2&1\\
   -2&0
  \end{array}
\hspace{-2mm}\right], \bar{B}^{30}=\left[\hspace{-2mm}
  \begin{array}{cccc}
  3&-1\\
   0.6&2
  \end{array}
\hspace{-2mm}\right],\notag\\
\Sigma_{w^1}&=\Sigma_{w^2}=\Sigma_{w^3}=1,
\Sigma_{v^1}=\Sigma_{v^2}=\Sigma_{v^3}=\left[\hspace{-2mm}
  \begin{array}{cccc}
  1&0\\
  0&1
  \end{array}
\hspace{-2mm}\right],\notag\\
\mu^1&=\left[\hspace{-2mm}
  \begin{array}{cccc}
  1.2\\
  2
  \end{array}
\hspace{-2mm}\right], \mu^2=\left[\hspace{-2mm}
  \begin{array}{cccc}
  2\\
  5
  \end{array}
\hspace{-2mm}\right], \mu^3=\left[\hspace{-2mm}
  \begin{array}{cccc}
  -3\\
  10
  \end{array}
\hspace{-2mm}\right],\notag\\
\Sigma_{x_0^1}&=\Sigma_{x_0^2}=\Sigma_{x_0^3}=\left[\hspace{-2mm}
  \begin{array}{cccc}
  1&0\\
  0&1
  \end{array}
\hspace{-2mm}\right],\notag\\
Q&=\left[\hspace{-2mm}
  \begin{array}{cccccc}
  1&0&-1&1.2&0.6 &0.2\\
  0&1&2&0.8&1 &1\\
  -1&2&1.2&0.6&1 &1\\
  1.2&0.8&0.6&0.6&3 &1\\
  0.6&1&1&3&2 &1\\
  0.2&1&1&1&1 &2
  \end{array}
\hspace{-2mm}\right],\notag\\
 R&\hspace{-1mm}=\hspace{-1mm}\left[\hspace{-2mm}
  \begin{array}{cccccccc}
  5.7 \hspace{-1mm}&\hspace{-1mm}-1.0 \hspace{-1mm}&\hspace{-1mm}-0.1 \hspace{-1mm}&\hspace{-1mm}-0.4 \hspace{-1mm}&\hspace{-1mm}-3.9 &
  -4.8 \hspace{-1mm}&\hspace{-1mm}-0.9 \hspace{-1mm}&\hspace{-1mm}　1.9 \\
   -1.0\hspace{-1mm}&\hspace{-1mm} 6.5\hspace{-1mm}&\hspace{-1mm}0.5 \hspace{-1mm}&\hspace{-1mm}-4.7 \hspace{-1mm}&\hspace{-1mm}-1.6 &
   -0.4 \hspace{-1mm}&\hspace{-1mm}4.1 \hspace{-1mm}&\hspace{-1mm}0.9　\\
    -0.1\hspace{-1mm}&\hspace{-1mm} 0.5\hspace{-1mm}&\hspace{-1mm} 10.4\hspace{-1mm}&\hspace{-1mm}0.7 &
    2.3 \hspace{-1mm}&\hspace{-1mm}-3.7 \hspace{-1mm}&\hspace{-1mm}-3.9 \hspace{-1mm}&\hspace{-1mm}2.8　\\
     -0.4\hspace{-1mm}&\hspace{-1mm} -4.7\hspace{-1mm}&\hspace{-1mm}0.7 \hspace{-1mm}&\hspace{-1mm}8.9 &
     1.4 \hspace{-1mm}&\hspace{-1mm}4.9 \hspace{-1mm}&\hspace{-1mm}-0.4 \hspace{-1mm}&\hspace{-1mm}3.8　\\
      -3.9\hspace{-1mm}&\hspace{-1mm} -1.6\hspace{-1mm}&\hspace{-1mm}2.3 \hspace{-1mm}&\hspace{-1mm}1.4 &
      9.5 \hspace{-1mm}&\hspace{-1mm}5.9 \hspace{-1mm}&\hspace{-1mm}-2.6 \hspace{-1mm}&\hspace{-1mm}-2.8　\\
       -4.8\hspace{-1mm}&\hspace{-1mm} -0.4\hspace{-1mm}&\hspace{-1mm}-3.7 \hspace{-1mm}&\hspace{-1mm}4.9 \hspace{-1mm}&\hspace{-1mm}5.9 &
       11.3 \hspace{-1mm}&\hspace{-1mm}4.2 \hspace{-1mm}&\hspace{-1mm}-0.7　\\
        -0.9\hspace{-1mm}&\hspace{-1mm} 4.1\hspace{-1mm}&\hspace{-1mm}-3.9 \hspace{-1mm}&\hspace{-1mm}-0.4 \hspace{-1mm}&\hspace{-1mm}-2.6 \hspace{-1mm}&\hspace{-1mm} 4.2\hspace{-1mm}&\hspace{-1mm}13.7 \hspace{-1mm}&\hspace{-1mm}1.6　\\
         1.9\hspace{-1mm}&\hspace{-1mm} 0.9\hspace{-1mm}&\hspace{-1mm} 2.8\hspace{-1mm}&\hspace{-1mm} 3.8\hspace{-1mm}&\hspace{-1mm}-2.8 \hspace{-1mm}&\hspace{-1mm}-0.7 \hspace{-1mm}&\hspace{-1mm}1.6\hspace{-1mm}&\hspace{-1mm}8.3　
         \end{array}
\hspace{-2mm}\right],\notag\\
 P_{101}&=\left[\hspace{-2mm}
  \begin{array}{cccccc}
  1.2&-1.4&-1.4&-3.1&-1.7& -5.5\\
  -1.4&1.3&0.9&1.8&-1.1 &0.7\\
  -1.4&0.9&5.9&1.4&-3.5 &-1.4\\
  -3.1&1.8&1.4&4.4&-0.2 &1.5\\
  -1.7&-1.1&-3.5&-0.2&5.1 &2.7\\
  -5.5&0.7&-1.4 & 1.5&2.7 &10.4
  \end{array}
\hspace{-2mm}\right].
\end{align}}

From Theorem \ref{th2}, since Assumptions \ref{ass1} and \ref{ass2} hold for the coefficients given in \eqref{coe}, and $\Lambda_k$ and $\tilde{\Lambda}_k$ in \eqref{coma} are invertible, hence Problem \ref{prob1} can be uniquely solved. In this case, by using the optimal controls $u_k^i, i=1,2,3, k=0,\cdots,60$, the state trajectories of $x_k^1, x_k^2, x_k^3$ are presented as in Figures 2-4.
 \begin{figure}[htbp]\label{fig2}
  \centering
  \includegraphics[width=0.45\textwidth]{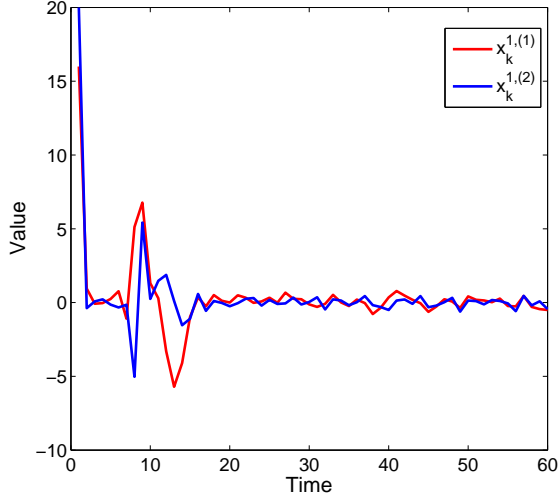}
  \caption{State trajectory of $x_k^1$, where $x_k^{1,(1)}, x_k^{1,(2)}$ are the first and the second element of $x_k^{1}$, respectively.}
\end{figure}
 \begin{figure}[htbp]\label{fig3}
  \centering
  \includegraphics[width=0.45\textwidth]{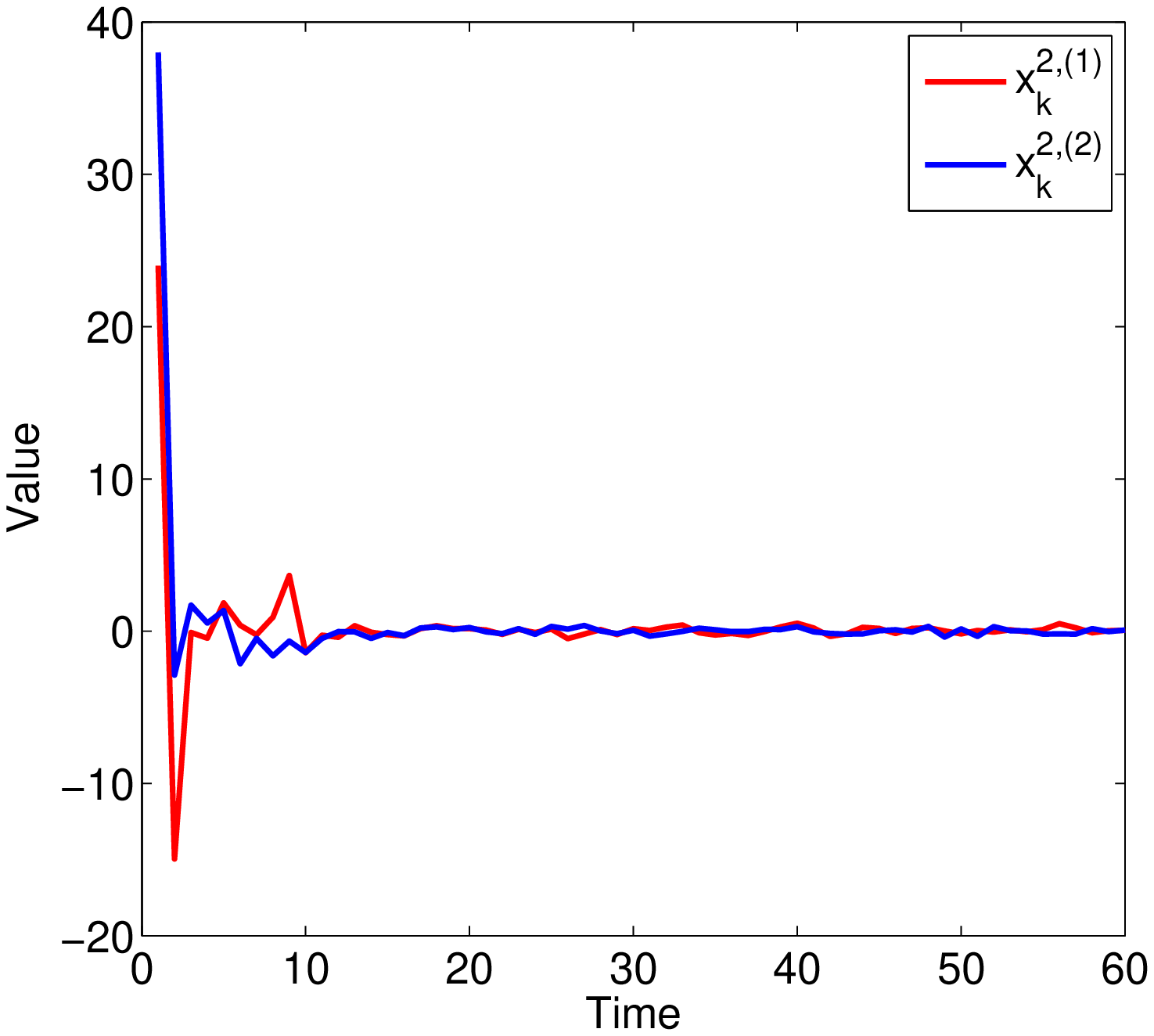}
  \caption{State trajectory of $x_k^2$, $x_k^{2,(1)}, x_k^{2,(2)}$ are the first and the second element of $x_k^{2}$, respectively.}
\end{figure}
 \begin{figure}[htbp]\label{fig4}
  \centering
  \includegraphics[width=0.45\textwidth]{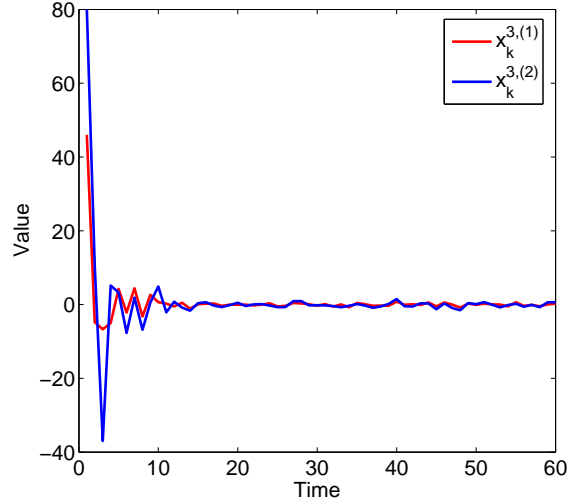}
  \caption{State trajectory of $x_k^3$, $x_k^{3,(1)}, x_k^{3,(2)}$ are the first and the second element of $x_k^{2}$, respectively.}
\end{figure}

As shown, each subsystem state $x_k^i, i=1,2,3$ is convergent with optimal controls $u_k^i, i=0,\cdots,3$ calculated via Theorem \ref{th2}.

\subsection{State Trajectory with Different Packet Dropout Rates}
In this section, we will explore the effects on the state trajectories with different packet dropout rates $p^i, i=1,2,3$.

Without loss of generality, we choose the same coefficients with \eqref{coe}, and the state trajectory of subsystem 1 is given in Figures 5-6 with different packet dropout rates.
 \begin{figure}[htbp]\label{fig5}
  \centering
  \includegraphics[width=0.45\textwidth]{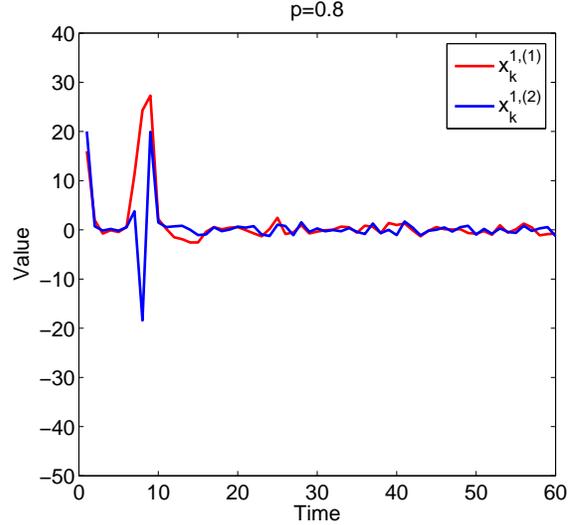}
  \caption{State trajectory of $x_k^1$, with $p^1=p^2=p^3=0.8$.}
\end{figure}
 \begin{figure}[htbp]\label{fig6}
  \centering
  \includegraphics[width=0.45\textwidth]{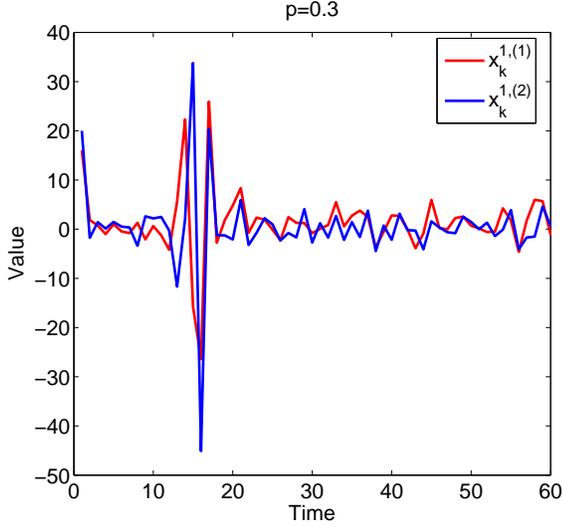}
  \caption{State trajectory of $x_k^1$, with $p^1=p^2=p^3=0.3$.}
\end{figure}

It can be observed that the convergence rate of $x_k^1$ decreases with the packet dropout rate {$1-p^i, i=1,2,3$ becoming larger}.

\section{Conclusion}
In this paper, we have investigated the optimal local and remote control problem for MN-NCSs with unreliable uplink channels and multiple subsystems. By adopting the Pontryagin maximum principle, the necessary and sufficient solvability conditions have been derived. Moreover, we have proposed a novel approach to decouple G-BFSDEs associated with the Pontryagin maximum principle. Finally, by introducing the {asymmetric} CREs, the optimal local and remote control strategies {were} derived in terms of the solution to CREs, which can be calculated backwardly. The proposed methods and the obtained results in this paper provide some inspirations for {studying} general control problems with {asymmetric} information structures.

\section*{Appendix: Proof of Theorem \ref{th2}}

\begin{proof}
  We will show the main results by using an induction method.

  Note that the terminal conditions are $P_{N+1}=H_{N+1}=L_{N+1}$, and $\Theta_N=P_{N+1}X_{N+1}$, it can be calculated from \eqref{adeqs} that
  \begin{align}\label{unm}
   0&=R\hat{U}_N+E[(B+\sum_{i=1}^{L}w_N^i\bar{\mathbf{B}}^i)^T\Theta_N|\mathcal{F}^0_N]\notag\\
   &=R\hat{U}_N+E[(B+\sum_{i=1}^{L}w_N^i\bar{\mathbf{B}}^i)^TP_{N+1}X_{N+1}|\mathcal{F}^0_N]\notag\\
   &=[R+B^TP_{N+1}B+\sum_{i=1}^{L}(\bar{\mathbf{B}}^i)^TP_{N+1}\bar{\mathbf{B}}^i]\hat{U}_N\notag\\
   &+[B^TP_{N+1}A+\sum_{i=1}^{L}(\bar{\mathbf{B}}^i)^TP_{N+1}\bar{\mathbf{A}}^i]E[X_N|\mathcal{F}^0_N]\notag\\
   &=\Lambda_N\hat{U}_N+\Psi_NE[X_N|\mathcal{F}^0_N].
  \end{align}

  From Theorem \ref{th1} we know that Problem \ref{prob1} is uniquely {solvable} if and only if G-FBSDEs \eqref{gfbs2} is uniquely {solvable} under Assumptions \ref{ass1} and \ref{ass2}. Moreover, \eqref{unm} is uniquely solved if and only if $\Lambda_N$ is invertible. In this case, $\hat{U}_N$ can be derived as in \eqref{uk0}.

  Next, from \eqref{adeqs2} we know
  \begin{align}\label{theni}
   \Theta_N^i&=P_{N+1}^{i1}x_{N+1}^1+\cdots+P_{N+1}^{iL}x_{N+1}^L,
  \end{align}
 then there holds
  \begin{align}\label{unm2}
    0&=R^{ii}\tilde{u}_N^i+E[(B^i+w_N^i\bar{B}^i)^T\sum_{j=1}^{L}P_{N+1}^{ij}x_{N+1}^j|\mathcal{G}_N]\notag\\
  &~~~~~~~~~~~~~-E[(B^i+w_N^i\bar{B}^i)^T\sum_{j=1}^{L}P_{N+1}^{ij}x_{N+1}^j|\mathcal{G}_N^i]\notag\\
  &=R^{ii}\tilde{u}_N^i+E[(B^i+w_N^i\bar{B}^i)^T\sum_{j=1}^{L}P_{N+1}^{ij}
  [(A^j+w_k^j\bar{A})x_{N}^j\notag\\
  &~~~+(B^j+w_k^j\bar{B})u_{N}^j+(B^{j0}+w_k^j\bar{B}^{j0})u_N^0]|\mathcal{G}_N]\notag\\
  &-E[(B^i+w_N^i\bar{B}^i)^T\sum_{j=1}^{L}P_{N+1}^{ij} [(A^j+w_k^j\bar{A})x_{N}^j\notag\\
  &~~~+(B^j+w_k^j\bar{B})u_{N}^j+(B^{j0}+w_k^j\bar{B}^{j0})u_N^0]|\mathcal{G}_N^i]\notag\\
  &=R^{ii}\tilde{u}_N^i+E[(B^i+w_N^i\bar{B}^i)^TP_{N+1}^{i}
  [(A^i+w_k^i\bar{A})x_{N}^i\notag\\
  &~~~+(B^i+w_k^i\bar{B})u_{N}^i+(B^{j0}+w_k^i\bar{B}^{j0})u_N^0]|\mathcal{G}_N]\notag\\
  &-E[(B^i+w_N^i\bar{B}^i)^TP_{N+1}^{i} [(A^i+w_k^i\bar{A})x_{N}^i\notag\\
  &~~~+(B^i+w_k^i\bar{B})u_{N}^i+(B^{j0}+w_k^j\bar{B}^{j0})u_N^0]|\mathcal{G}_N^i]\notag\\
  &=[R^{ii}+(B^i)^TP_{N+1}^{i}B^i+\Sigma_{w^i}(\bar{B}^i)^TP_{N+1}^{i}\bar{B}^i]\tilde{u}_N^i\notag\\
  &+[(B^i)^TP_{N+1}^{i}A^i+\Sigma_{w^i}(\bar{B}^i)^TP_{N+1}^{i}\bar{A}^i]\tilde{x}_{N}^i\notag\\
  &=\tilde{\Pi}_N^i\tilde{u}_N^i+\tilde{\Omega}_N^i\tilde{x}_N^i.
  \end{align}
  Since $\tilde{\Pi}_N^i$ is positive definite as shown in Lemma \ref{lem6}, thus $\tilde{u}_N^i$ can be uniquely solved as \eqref{uki}. {Hence, the optimal remote controller $u_N^0$ and the optimal local controllers $u_N^i, i=1,\cdots, L$ can be derived as \eqref{uk}.
}

  Consequently, we will calculate $\Theta_{N-1}$, actually, from \eqref{coss} we have
  \begin{align}\label{then1}
    \Theta_{N-1}&=E\left[(A+\sum_{i=1}^{L}w_N^i\bar{\mathbf{A}}^i)^T\Theta_N+QX_N\Bigg|\mathcal{G}_N\right]\notag\\
    &=E\left[(A+\sum_{i=1}^{L}w_N^i\bar{\mathbf{A}}^i)^TP_{N+1}X_{N+1}+QX_N\Bigg|\mathcal{G}_N\right]\notag\\
    &=E\Big[(A+\sum_{i=1}^{L}w_N^i\bar{\mathbf{A}}^i)^TP_{N+1}[(A+\sum_{i=1}^{L}w_N^i\bar{\mathbf{A}}^i)X_N\notag\\
    &+(B+\sum_{i=1}^{L}w_N^i\bar{\mathbf{B}}^i)(\hat{U}_N+\tilde{U}_N)+V_N]\Big|\mathcal{G}_N\Big]+QX_N\notag\\
    &=[Q+A^TP_{N+1}A+\sum_{i=1}^{L}\Sigma_{w_N^i}(\bar{\mathbf{A}}^i)^TP_{N+1}\bar{\mathbf{A}}^i]X_N\notag\\
    &+[A^TP_{N+1}B+\sum_{i=1}^{L}\Sigma_{w_N^i}(\bar{\mathbf{A}}^i)^TP_{N+1}\bar{\mathbf{B}}^i]\hat{U}_N\notag\\
    &+[A^TP_{N+1}B+\sum_{i=1}^{L}\Sigma_{w_N^i}(\bar{\mathbf{A}}^i)^TP_{N+1}\bar{\mathbf{B}}^i]\tilde{U}_N\notag\\
    &=[Q+A^TP_{N+1}A+\sum_{i=1}^{L}\Sigma_{w_N^i}(\bar{\mathbf{A}}^i)^TP_{N+1}\bar{\mathbf{A}}^i]X_N\notag\\
    &-\Psi_N^T\Lambda_N^{-1}\Psi_N\hat{X}_N-\Psi_N^T\Lambda_N^{-1}\Psi_N\tilde{X}_N\notag\\
    &=P_N\hat{X}_N+H_N\tilde{X}_N.
  \end{align}
  where $P_N, H_N$ satisfy \eqref{re} for $k=N$.

To complete the induction approach, we assume for $k=l+1,\cdots, N$, there holds
\begin{itemize}
  \item The optimal controls $u_k^i, i=0,\cdots, L$ are given by \eqref{uk}-\eqref{uki};
  \item The relationship between the system state $X_k$ and costate $\Theta_k$ satisfies:
      \begin{align}\label{rela12}
        \Theta_{k-1}=P_k\hat{X}_{k}+H_k\tilde{X}_k,
      \end{align}
      where $P_k, H_k$ can be calculated from \eqref{re} backwardly.
\end{itemize}

  Next, we will calculate $u^i_{l}$.
   Note that $\Theta_{l}=P_{l+1}\hat{X}_{l+1}+H_{l}\tilde{X}_{l+1}$, it can be calculated from \eqref{adeqs} that
  \begin{align}\label{1unm}
   0&=R\hat{U}_{l}+E[(B+\sum_{i=1}^{L}w_l^i\bar{\mathbf{B}}^i)^T\Theta_{l}|\mathcal{F}^0_{l}]\notag\\
   &=R\hat{U}_{l}+E[(B+\sum_{i=1}^{L}w_l^i\bar{\mathbf{B}}^i)^T(P_{l+1}\hat{X}_{l+1}\notag\\
   &~~~~~~~~~~+H_{l+1}\tilde{X}_{l+1})|\mathcal{F}^0_{l}]\notag\\
   &=R\hat{U}_{l}+E\Big[(B+\sum_{i=1}^{L}w_l^i\bar{\mathbf{B}}^i)^T\big\{P_{l+1}[\Gamma_{l+1}X_{l+1}\notag\\
   &~~~~~+(I_{\mathbb{N}_L}-\Gamma_{l+1})(A\hat{X}_{l}+B\hat{U}_{l})]\big\}\notag\\
   &+(B+\sum_{i=1}^{L}w_l^i\bar{\mathbf{B}}^i)^T\big\{H_{l+1}(I_{\mathbb{N}_L}-\Gamma_{l+1})[A\tilde{X}_{l}+B\tilde{U}_{l}\notag\\
   &~~~~~+V_{l}
   +\sum_{i=1}^{L}w_l^i
   (\bar{\mathbf{A}}^iX_{l}+\bar{\mathbf{B}}^iU_{l})]\big\}|\mathcal{F}^0_{l}\Big]\notag\\
   &=\{R+B^TP_{l+1}[p B+(I_{\mathbb{N}_L}-p) B]\notag\\
   &~~~~~~+\sum_{i=1}^{L}\Sigma_{w_l^i}(\bar{\mathbf{B}}^i)^T[P_{l+1}p+H_{l+1}(I_{\mathbb{N}_L}-p)]\bar{\mathbf{B}}^i\}\hat{U}_{l}\notag\\
   &+\{B^TP_{l+1}[p A+(I_{\mathbb{N}_L}-p) A]\notag\\
   &~~~~~~+\sum_{i=1}^{L}\Sigma_{w_l^i}(\bar{\mathbf{B}}^i)^T[P_{l+1}p+H_{l+1}(I_{\mathbb{N}_L}-p)]\bar{\mathbf{B}}^i\}\hat{X}_{l}\notag\\
   &=\Lambda_l\hat{U}_l+\Psi_l\hat{X}_l.
   \end{align}
  By following the discussions below \eqref{unm}, we know that \eqref{1unm} can be uniquely solved if and only if $\Lambda_l$ is invertible. {Then} $\hat{U}_l$ can be derived as in \eqref{uk0}.

  Since $\tilde{U}_l=U_l-\hat{U}_l$, we have
  \begin{align}\label{ads1}
    0 & =R\tilde{U}_{l}+E[(B+\sum_{i=1}^{L}w_l^i\bar{\mathbf{B}}^i)^T\Theta_{l}|\mathcal{G}_{l}]\notag\\
    &~~~~~~~~~-E[(B+\sum_{i=1}^{L}w_l^i\bar{\mathbf{B}}^i)^T\Theta_{l}|\mathcal{F}^0_{l}]\notag\\
    &=R\tilde{U}_{l}+E\Big[(B+\sum_{i=1}^{L}w_l^i\bar{\mathbf{B}}^i)^T\big\{P_{{l+1}}[\Gamma_{{l+1}}X_{{l+1}}\notag\\
   &+(I_{\mathbb{N}_L}-\Gamma_{{l+1}})(A\hat{X}_{l}+B\hat{U}_{l})]\big\}\notag\\
   &+(B+\sum_{i=1}^{L}w_l^i\bar{\mathbf{B}}^i)^T\big\{H_{{l+1}}(I_{\mathbb{N}_L}-\Gamma_{{l+1}})[A\tilde{X}_{l}+B\tilde{U}_{l}\notag\\
   &+V_{l}+\sum_{i=1}^{L}w_l^i(\bar{\mathbf{A}}^iX_{l}+\bar{\mathbf{B}}^iU_{l})]\big\}|\mathcal{G}_{l}\Big]\notag\\
   &-E\Big[(B+\sum_{i=1}^{L}w_l^i\bar{\mathbf{B}}^i)^T\big\{P_{{l+1}}[\Gamma_{{l+1}}X_{{l+1}}\notag\\
   &+(I_{\mathbb{N}_L}-\Gamma_{{l+1}})(A\hat{X}_{l}+B\hat{U}_{l})]\big\}\notag\\
   &+(B+\sum_{i=1}^{L}w_l^i\bar{\mathbf{B}}^i)^T\big\{H_{{l+1}}(I_{\mathbb{N}_L}-\Gamma_{{l+1}})[A\tilde{X}_{l}+B\tilde{U}_{l}\notag\\
   &+V_{l}+\sum_{i=1}^{L}w_l^i(\bar{\mathbf{A}}^iX_{l}+\bar{\mathbf{B}}^iU_{l})]\big\}|\mathcal{F}^0_{l}\Big]\notag\\
    &=R\tilde{U}_{l}+[B^TP_{l+1}p A+\sum_{i=1}^{L}\Sigma_{w_l^i}(\bar{\mathbf{B}}^i)^TP_{l+1}p \bar{\mathbf{A}}^i]X_{l}\notag\\
    &+[B^TP_{l+1}p B+\sum_{i=1}^{L}\Sigma_{w_l^i}(\bar{\mathbf{B}}^i)^TP_{l+1}p \bar{\mathbf{B}}^i]U_{l}\notag\\
    &+B^TP_{l+1}(I_{\mathbb{N}_L}-p) A\hat{X}_{l}+B^TP_{l+1}(I_{\mathbb{N}_L}-p) B\hat{U}_{l}\notag\\
    &+B^TH_{l+1}(I_{\mathbb{N}_L}-p) A\tilde{X}_{l}
    +B^TH_{l+1}(I_{\mathbb{N}_L}-p) B\tilde{U}_{l}\notag\\
    &+\sum_{i=1}^{L}\Sigma_{w_l^i}(\bar{\mathbf{B}}^i)^TH_{l+1}(I_{\mathbb{N}_L}-p) \bar{\mathbf{A}}^iX_{l}\notag\\
    &+\sum_{i=1}^{L}\Sigma_{w_l^i}(\bar{\mathbf{B}}^i)^TH_{l+1}(I_{\mathbb{N}_L}-p) \bar{\mathbf{B}}^iU_{l}\notag\\
    &-\{B^TP_{l+1}(p B+(I_{\mathbb{N}_L}-p) B)\notag\\
   & +\sum_{i=1}^{L}\Sigma_{w_l^i}(\bar{\mathbf{B}}^i)^T[P_{l+1}p +H_{l+1}(I_{\mathbb{N}_L}-p)]\bar{\mathbf{B}}^i\}\hat{U}_{l}\notag\\
   &-\{B^TP_{l+1}(p A+(I_{\mathbb{N}_L}-p) A)\notag\\
   &+\sum_{i=1}^{L}\Sigma_{w_l^i}(\bar{\mathbf{B}}^i)^T[P_{l+1}p+H_{l+1}(I_{\mathbb{N}_L}-p)]\bar{A}\}\hat{X}_{l}\notag\\
   &=\{R+B^T[P_{l+1}p+H_{l+1}(I_{\mathbb{N}_L}-p) ]B\notag\\
   &+\sum_{i=1}^{L}\Sigma_{w_l^i}(\bar{\mathbf{B}}^i)^T[P_{l+1}p +H_{l+1}(I_{\mathbb{N}_L}-p) ]\bar{\mathbf{B}}^i\}\tilde{U}_{l}\notag\\
   &+\{B^T[P_{l+1}p+H_{l+1}(I_{\mathbb{N}_L}-p) ]A\notag\\
   &+\sum_{i=1}^{L}\Sigma_{w_l^i}(\bar{\mathbf{B}}^i)^T[P_{l+1}p+H_{l+1}(I_{\mathbb{N}_L}-p)]\bar{\mathbf{A}}^i\}\tilde{X}_{l}\notag\\
   &=\tilde{\Lambda}_{l}\tilde{U}_{l}+\tilde{\Psi}_{l}\tilde{X}_{l}.
  \end{align}
Hence, the solvability of \eqref{ads1} is equivalent to the invertibility of $\tilde{\Lambda}_{l}$, and we have
  \begin{align}\label{tildeu}
   \tilde{U}_{l}=-\tilde{\Lambda}_{l}^{-1}\tilde{\Psi}_{l}\tilde{X}_{l},
  \end{align}
  i.e, \eqref{uki} can be verified for $k=l$.

  Next, from \eqref{adeqs2} we know
  \begin{align}\label{1theni}
   \Theta_{l}^i&=P_{{l+1}}^{i1}\hat{x}_{{l+1}}^1+\cdots+P_{{l+1}}^{iL}\hat{x}_{{l+1}}^L\notag\\
   &+H_{{l+1}}^{i1}\tilde{x}_{{l+1}}^1+\cdots+H_{{l+1}}^{iL}\tilde{x}_{{l+1}}^L.
  \end{align}
 {Thus, using Lemmas \ref{lem3}-\ref{lem4}, there holds}
  \begin{align}\label{1unm2}
    0&=R^{ii}\tilde{u}_{l}^i+E[(B^i+w_{l}^i\bar{B}^i)^T\Theta_{l}^i|\mathcal{G}_{l}]\notag\\
  &~~~~~~~~~~~~~-E[(B^i+w_{l}^i\bar{B}^i)^T\Theta_{l}^i|\mathcal{G}_{l}^i]\notag\\
  &=R^{ii}\tilde{u}_{l}^i+E\Big[(B^i+w_{l}^i\bar{B}^i)^TP_{{l+1}}^{i}
  \{\gamma_{{l+1}}^i[(A^i+w_{l}^i\bar{A})x_{l}^i\notag\\
  &~~~+(B^i+w_{l}^i\bar{B})u_{l}^i+(B^{i0}+w_{l}^i\bar{B}^{i0})u_{l}^0]\notag\\
  &+(1-\gamma_{l+1}^i)(A^i\hat{x}_{l}^i+B^i\hat{u}_{l}^i+B^{i0}u_{l}^0)\}\Big|\mathcal{G}_{l}\Big]\notag\\
  &+E\Big[(B^i+w_{l}^i\bar{B}^i)^TH_{{l+1}}^{i}
  (1-\gamma_{{l+1}}^i)[A^i\tilde{x}_{l}^i+B^i\tilde{u}_{l}^i\notag\\
  &+w_{l}^i(\bar{A}x_{{l+1}}^i+\bar{B}u_{l}^i+\bar{B}^{j0})u_{l+1}^0]\Big|\mathcal{G}_{l}\Big]\notag\\
  &-E\Big[(B^i+w_{l}^i\bar{B}^i)^TP_{{l+1}}^{i}
  \{\gamma_{{l+1}}^i[(A^i+w_k^i\bar{A})x_{{l+1}}^i\notag\\
  &~~~+(B^i+w_k^i\bar{B})u_{{l+1}}^i+(B^{j0}+w_k^i\bar{B}^{j0})u_{l+1}^0]\notag\\
  &+(1-\gamma_{l+1}^i)(A^i\hat{x}_{l}^i+B^i\hat{u}_{l}^i+B^{i0}u_{l}^0)\}\Big|\mathcal{G}_{l}^i\Big]\notag\\
  &-E\Big[(B^i+w_{l}^i\bar{B}^i)^TH_{{l+1}}^{i}
  (1-\gamma_{{l+1}}^i)[A^i\tilde{x}_{l}^i+B^i\tilde{u}_{l}^i\notag\\
  &+w_{l}^i(\bar{A}x_{{l+1}}^i+\bar{B}u_{l}^i+\bar{B}^{j0})u_{l+1}^0]\Big|\mathcal{G}_{l}^i\Big]\notag\\
  &=[R^{ii}+(B^i)^T(p^iP_{{l+1}}^{i}+(1-p^i)H_{l+1}^{i})B^i\notag\\
  &+\Sigma_{w^i}(\bar{B}^i)^T(p^iP_{{l+1}}^{i}+(1-p^i)H_{l+1}^{i})\bar{B}^i]\tilde{u}_{l}^i\notag\\
  &+[(B^i)^T(p^iP_{{l+1}}^{i}+(1-p^i)H_{l+1}^{i})A^i\notag\\
  &+\Sigma_{w^i}(\bar{B}^i)^T(p^iP_{{l+1}}^{i}+(1-p^i)H_{l+1}^{i})\bar{A}^i]\tilde{x}_{l}^i\notag\\
  &=\Pi_{l}^i\tilde{u}_{l}^i+\Omega_{l}^i\tilde{x}_{l}^i.
  \end{align}
 In this case, $\tilde{u}_l^i$ can be derived as \eqref{uki} for $k=l$. {Therefore, the optimal controls $u_l^i, i=1,\cdots,L$ can be verified as \eqref{uk}.
}

  Consequently, we will calculate $\Theta_{l-1}$, from \eqref{coss}
  \begin{align}\label{2then1}
    \Theta_{l-1}&=E\left[(A+\sum_{i=1}^{L}w_l^i\bar{\mathbf{A}}^i)^T\Theta_l+QX_l\Bigg|\mathcal{G}_l\right]\notag\\
    &=E\Big[(A+\sum_{i=1}^{L}w_l^i\bar{\mathbf{A}}^i)^T(P_{l+1}\hat{X}_{l+1}
    \hspace{-1mm}+\hspace{-1mm}H_{l+1}\tilde{X}_{l+1})\notag\\
    &+QX_l|\mathcal{G}_l\Big]\notag\\
    &=QX_l+E\Big[(A+\sum_{i=1}^{L}w_l^i\bar{\mathbf{A}}^i)^TP_{l+1}\notag\\&\times\big[\Gamma_{l+1}((A+\sum_{i=1}^{L}w_l^i\bar{\mathbf{A}}^i)X_{l}\notag\\
    &+(B+\sum_{i=1}^{L}w_l^i\bar{\mathbf{B}}^i)(\hat{U}_l+\tilde{U}_l)+V_l)\notag\\
   &+(I_{\mathbb{N}_l}-\Gamma_{l+1})(A\hat{X}_{l}+B\hat{U}_{l})\big]\notag\\
   &+(A+\sum_{i=1}^{L}w_l^i\bar{\mathbf{A}}^i)^TH_{l+1}(I_{\mathbb{N}_l}-\Gamma_{l+1})[A\tilde{X}_{l}+B\tilde{U}_{l}\notag\\
   &+V_{l}+\sum_{i=1}^{L}w_l^i(\bar{\mathbf{A}}^iX_{l}+\bar{\mathbf{B}}^i(\hat{U}_{l}+\tilde{U}_l))]\Big|\mathcal{G}_l\Big]\notag\\
   &=[Q+A^TP_{l+1}pA
   +\sum_{i=1}^{L}\Sigma_{w_l^i}(\bar{\mathbf{A}}^i)^TP_{l+1}p\bar{\mathbf{A}}^i]X_l\notag\\
   &+(A^TP_{l+1}p B+\sum_{i=1}^{L}\Sigma_{w_l^i}(\bar{\mathbf{A}}^i)^TP_{l+1}p\bar{\mathbf{B}}^i)\hat{U}_l\notag\\
   &+(A^TP_{l+1}p B+\sum_{i=1}^{L}\Sigma_{w_l^i}(\bar{\mathbf{A}}^i)^TP_{l+1}p\bar{\mathbf{B}}^i)\tilde{U}_l\notag\\
   &+A^TP_{l+1}(I_{\mathbb{N}_L}-p)A\hat{X}_l+A^TP_{l+1}(I_{\mathbb{N}_L}-p)B\hat{U}_l\notag\\
   &+A^TH_{l+1}(I_{\mathbb{N}_L}-p)A\tilde{X}_l+A^TH_{l+1}(I_{\mathbb{N}_L}-p)B\tilde{U}_l\notag\\
   &+\sum_{i=1}^{L}\Sigma_{w_l^i}(\bar{\mathbf{A}}^i)^TH_{l+1}(I_{\mathbb{N}_L}-p)\bar{\mathbf{A}}^iX_l
   \notag\\&+\sum_{i=1}^{L}\Sigma_{w_l^i}(\bar{\mathbf{A}}^i)^TH_{l+1}(I_{\mathbb{N}_L}-p)\bar{\mathbf{B}}^i\hat{U}_l\notag\\
   &+\sum_{i=1}^{L}\Sigma_{w_l^i}(\bar{\mathbf{A}}^i)^TH_{l+1}(I_{\mathbb{N}_L}-p)\bar{\mathbf{B}}^i\tilde{U}_l\notag\\
   &=[Q+A^TP_{l+1}pA+\sum_{i=1}^{L}\Sigma_{w_l^i}(\bar{\mathbf{A}}^i)^TP_{l+1}p\bar{\mathbf{A}}^i\notag\\
   &+A^TP_{l+1}(I_{\mathbb{N}_L}-p)A\notag\\&+\sum_{i=1}^{L}\Sigma_{w_l^i}(\bar{\mathbf{A}}^i)^TH_{l+1}(I_{\mathbb{N}_L}-p)\bar{\mathbf{A}}^i]\hat{X}_l\notag\\
   &+\Psi_l\hat{U}_l
   +[Q+A^TP_{l+1}p A+\sum_{i=1}^{L}\Sigma_{w_l^i}(\bar{\mathbf{A}}^i)^TP_{l+1}p\bar{\mathbf{A}}^i\notag\\
   &+A^TH_{l+1}(I_{\mathbb{N}_L}-p)A\notag\\&+\sum_{i=1}^{L}\Sigma_{w_l^i}(\bar{\mathbf{A}}^i)^TH_{l+1}(I_{\mathbb{N}_L}-p)\bar{\mathbf{A}}^i]\tilde{X}_l
   \hspace{-1mm}+\hspace{-1mm}\tilde{\Psi}_l\tilde{U}_l\notag\\
    &=P_l\hat{X}_l+H_l\tilde{X}_l.
  \end{align}
Thus, \eqref{rela12} has been derived for $k=l$. {This ends the induction}.

Finally, we will calculate the optimal cost function. For simplicity, we denote
\begin{align}\label{vn}
  V_{k}&\triangleq E[X_k^T\Theta_{k-1}].
\end{align}

Hence, we have
\begin{align}\label{mivk}
  &V_k-V_{k+1}=E[X_k^T\Theta_{k-1}]-E[X_{k+1}^T\Theta_{k}]\notag\\
  =&E\Big[E[X_k^T(A+\sum_{i=1}^{L}w_k^i\bar{\mathbf{A}}^i)^T\Theta_k+QX_k|\mathcal{G}_k]\notag\\
  &-E\big[[(A+\sum_{i=1}^{L}w_k^i\bar{\mathbf{A}}^i)X_k\notag\\
  &~~~+(B+\sum_{i=1}^{L}w_k^i\bar{\mathbf{B}}^i)U_k+V_k]^T\Theta_{k}|\mathcal{G}_k\big]\Big]\notag\\
  =&E\Big[X_k^TQX_k-V_k^T\Theta_k-U_k^T E[(B+\sum_{i=1}^{L}w_k^i\bar{\mathbf{B}}^i)^T\Theta_k|\mathcal{G}_k]\Big]\notag\\
  =&E\Big[X_k^TQX_k+U_k^TRU_k-V_k^T\Theta_k\Big].
\end{align}

Note $\Theta_N=P_{N+1}X_{N+1}$, then taking summation of \eqref{mivk} from $0$ to $N$, the optimal cost function can be given by
\begin{align}\label{ocp}
  J_N^*=&\sum_{k=0}^{N}E[V_k^T\Theta_k]+E[X_0^T\Theta_{-1}]\notag\\
  =&\sum_{k=0}^{N}E[V_k^T(P_{k+1}\hat{X}_{k+1}+H_{k+1}\tilde{X}_{k+1})]\notag\\
  &+E[X_0^T(P_0\hat{X}_0+H_0\tilde{X}_k)]\notag\\
  =&\sum_{k=0}^{N}E[V_k^T(P_{k+1}p+H_{k+1}(I_{\mathbb{N}_L}-p))V_k]\notag\\
  &+E[\hat{X}_0^TP_0\hat{X}_0+\tilde{X}_0^TH_0\tilde{X}_0]\notag\\
  =&\sum_{k=0}^{N}E[V_k^TL_{k+1}V_k]+E[\hat{X}_0^TP_0\hat{X}_0
  +\sum_{i=0}^{L}(\tilde{x}_0^i)^TH_0^{i}\tilde{x}_0^i]\notag\\
  =&\sum_{k=0}^{N}\sum_{i=0}^{L}E[(v_k^i)^TL^{i}_{k+1}v_k^i]\notag\\
  &+E[\hat{X}_0^TP_0\hat{X}_0+\sum_{i=0}^{L}(\tilde{x}_0^i)^TH_0^{i}\tilde{x}_0^i]\notag\\
  =&\sum_{i=0}^{L}E[(x_0^i)^TP_0^{i}x_0^i]+\sum_{i=0}^{L}(1-p^i)Tr[\Sigma_{x_0^i}(P_0^{i}+H_0^{i})]\notag\\
  &+\sum_{i=0}^{L}\sum_{k=0}^{N}Tr(\Sigma_{v^i}L^{i}_{k+1}).
\end{align}

The proof is complete.
\end{proof}

\end{document}